\theoremstyle{plain}
\newtheorem{teo}{Theorem}
\newtheorem{pro}{Proposition}
\newtheorem{lem}{Lemma}
\newtheorem{cor}{Corollary}
\theoremstyle{definition}
\theoremstyle{remark}
\title[The Brownian height fragmentation]{The falling apart of the tagged fragment and the asymptotic disintegration of the Brownian height fragmentation}
\author{Ger\'onimo Uribe Bravo}
\address{Instituto de Matem\'aticas, U.N.A.M.\\
	\'Area de la investigaci\'on cient\ii fica\\
	Circuito Exterior	Ciudad Universitaria \\
	Coyoac\'an 04510\\
	M\'exico, D.F. M\'exico
}
\thanks{Research supported by CoNaCyT grant No. 174498}
\subjclass[2000]{60G18,60J65}
\keywords{Self-similar fragmentation, normalized Brownian excursion}
\email{uribe@matem.unam.mx}
\date{\today}
\newcommand{\paren}[1]{\ensuremath{\left( #1\right) }}
\newcommand{\e}{\ensuremath{\mathbf{e}}}
\newenvironment{esn}{\begin{equation*}}{\end{equation*}}
\newcommand{\abs}[1]{\hspace{.25mm}\lvert#1\rvert\hspace{.25mm}}
\newcommand{\set}[1]{\ensuremath{\left\{ #1\right\} }}
\newcommand{\mc}[1]{\ensuremath{\mathscr{#1}}}
\newcommand{\fun}[3]{\ensuremath{#1:#2\to #3}}
\newcommand{\re}{\ensuremath{\mathbb{R}}}
\newcommand{\imf}[2]{\ensuremath{#1\!\paren{#2}}}
\newcommand{\eps}{\ensuremath{ \varepsilon}}
\newcommand{\indi}[1]{\si_{#1}}
\newcommand{\si}{{\ensuremath{\bf{1}}}}
\newcommand{\na}{\ensuremath{\mathbb{N}}}
\newcommand{\sfleche}{\mc{S}^{\downarrow}}
\newcommand{\esp}[1]{\ensuremath{\se\! \left( #1 \right)}}
\newcommand{\se}{\ensuremath{\bb{E}}}
\newcommand{\bb}[1]{\mathbb{#1}}
\newcommand{\p}{\ensuremath{ \sip  } }
\newcommand{\sip}{\bb{P}}
\DeclareMathOperator{\pd}{PD}
\newcommand{\probac}[2]{\ensuremath{\proba{\cond{#1}{#2}}}}
\newcommand{\cond}[2]{\left.\vphantom{#2}#1\ \right| #2}
\newcommand{\proba}[1]{\ensuremath{\sip\! \left( #1 \right)}}
\newcommand{\F}{\ssa}
\newcommand{\ssa}{\ensuremath{\mathscr{F}}}
\newcommand{\sag}[1]{\sigma\!\paren{#1}}
\newcommand{\z}{\ensuremath{\mathbb{Z}}}
\newcommand{\sa}{\ensuremath{\sigma}\nobreakdash-field}
\newcommand{\fund}[3]{\ensuremath{#1:#2\mapsto #3}}
\newcommand{\nbd}{\nobreakdash -}
\newcommand{\ii}{\'{\i}}
\begin{document} 
\maketitle
\begin{abstract}
We present a further analysis of the fragmentation at heights of the normalized Brownian excursion. Specifically we study a representation for the mass of a tagged fragment in terms of a Doob transformation of the $1/2$-stable subordinator and use it to study its jumps; this accounts for a description of how a typical fragment falls apart. These results carry over to the height fragmentation of the stable tree. Additionally, the sizes of the fragments in the Brownian height fragmentation when it is about to reduce to dust are described in a limit theorem. 
\end{abstract}
\renewcommand{\abstractname}{R\'esum\'e}
\begin{abstract}
Une \'etude additionnelle de la fragmentation de hauteur brownienne est pr\'esent\'ee. Plus pr\'ecis\'ement, une repr\'esentation de la masse du fragment marqu\'e en termes d'une transformation de Doob du subordinateur stable d'indice $1/2$ est d\'ecrite puis utilis\'ee pour \'etudier les sauts du processus de masse; ceci nous renseigne sur la fa\c con  dans laquelle un fragment typique se casse. Ces r\'esultats se g\'en\'eralisent au cadre des fragmentations de hauteur de l'arbre stable.  Enfin, nous donnons un th\'eor\`eme limite de la fragmentation de l'excursion Brownienne par 
les hauteurs, centr\'ee autour du dernier fragment qui se d\'ecompose en poussi\`ere.
\end{abstract}

\section{Introduction and statement of the results}
\label{intro}
 A new class of stochastic processes, that of self-similar fragmentations, has been  introduced by Bertoin in \cite{homfrag} and \cite{ssfrag} and is part of the subject of the book \cite{bertoinFragCoag}. Informally, a self-similar interval fragmentation is a model for the splitting of $(0,1)$ into smaller and smaller (open) pieces in such a way that the evolution is Markovian and that the evolution of the process is independent on different components and restricted  to each component, it mimics the whole fragmentation, though maybe on a different time scale which depends on a power of its size. Self -similar fragmentations are a parametric class of processes characterized by the self-similarity index, the rate of erosion, and the so-called L\'evy measure describing sudden dislocations. This class bears a close relationship with that of positive self-similar Markov processes, for which there has been renewed interest in recent years.  As Aldous  points out in his survey  \cite{mean-field}, fragmentation processes might be of use in the study of coalescence phenomena. This idea has been exemplified by Aldous and Pitman in \cite{sacoalescent} in the construction of the Standard Additive Coalescent by time-reversing a fragmentation process constructed from the Continuum Random Tree. They show that a tagged fragment of their fragmentation can be represented as the multiplicative inverse of a stable subordinator which starts at $1$; in their own words, the relationship  should be obtainable directly from the one between the CRT and the normalized Brownian excursion  (they use a combinatorial method). Bertoin has showed in \cite{ssfrag} that the Aldous-Pitman fragmentation and the height fragmentation of the normalized Brownian excursion (or equivalently, the height fragmentation of the CRT), which is the subject of this work, differ only by their self-similarity index so that tagged fragments of both fragmentations are related by a time-change. This remark is one of the motivations for the following paragraphs since we provide a  representation of the tagged fragment of the height fragmentation of the CRT in terms of the opposite of a stable subordinator conditioned to reach zero continuously. This is done in the framework of continuous-time stochastic processes. 
 
We shall study the height fragmentation of the CRT, which was introduced by Bertoin as the second example illustrating the  theory  of self-similar fragmentations developed in \cite{ssfrag}. We shall also deal with its generalization to height fragmentations of $\alpha$-stable trees (for $\alpha\in (1,2]$) introduced by Miermont in \cite{heights}.  Although both processes can be thought to belong to the same parametric family $\paren{F^\alpha}_{\alpha\in (1,2]}$ of fragmentations, their irreconcilable difference lies in the fact that, following Miermont,   the first one is binary while the second one is infinitary. This means that fragments separate into two pieces in the $\alpha=2$ case and into infinitely many pieces when $\alpha\in (1,2)$. This difference is the reflection of the fact that while Brownian trajectories have continuous sample paths, other stable L\'evy processes feature jumps and  affects the sophistication of the arguments needed to study them as can be seen in \cite{duquesnelegall,levytrees,heights}. As we hope to make apparent in this note, past an initial threshold, aspects of both fragmentations can be studied without recourse to different arguments.   However, the Brownian case trivially admits a representation in terms of the normalized Brownian excursion which makes a more visual  analysis feasible; the corresponding visual analysis for $\alpha\in(1,2)$ is more technical and would be based on the height process coding L\'evy trees first introduced in \cite{LeGallLeJan} by Le Jan and Le Gall and subsequently developed in \cite{duquesnelegall} and \cite{levytrees} by Duquesne and Le Gall. Since some of our results are valid for all $\alpha\in(1,2]$, we choose to present both the Brownian proof, which only suggests how L\'evy trees could be used, and the general proof that holds for all parameters and is conceived to use less of this technical machinery, when possible. However, note that some of the tools already available to study L\'evy trees are not directly applicable to our case since our fragmentations are built from stable trees conditioned by their size. 

We now turn to a more formal recollection of the processes mentioned above which will let us state our main results.

\subsection{The Brownian height fragmentation}
This is constructed, in \cite{ssfrag}, from the normalized Brownian excursion, which is the process $\e$ obtained from a Brownian motion $B$ by the following procedure: let $g_t$ be the last zero of $B$ before $t$ and $d_t$ the first zero of $B$ after some fixed time $t$ and define\begin{esn}
\e_s=\frac{1}{\sqrt{d_t-g_t}}\abs{B_{g_t+s\paren{d_t-g_t}}},\quad s\in [0,1].
\end{esn}
The law of $\e$ does not depend on $t$ by the scaling properties of $B$. The Brownian height fragmentation is defined as follows: for nonnegative $t$, let\begin{equation}
\label{browfragdef}
F^2_t=\set{s\in (0,1):\e_s>t};
\end{equation}then the Brownian height fragmentation is the decreasing family of sets given by $F^2=\paren{F^2_t}_{t\geq 0}$. In Figure \ref{fragpic}, a visualization of $F^2_t$ is proposed, with some other quantities of interest that shall be introduced in the following paragraphs. The process $F^2$ takes values in the space $\mc{V}$ of open subsets of $(0,1)$, where a suitable metric exists  which turns it into a compact space.%
\begin{figure}
\includegraphics{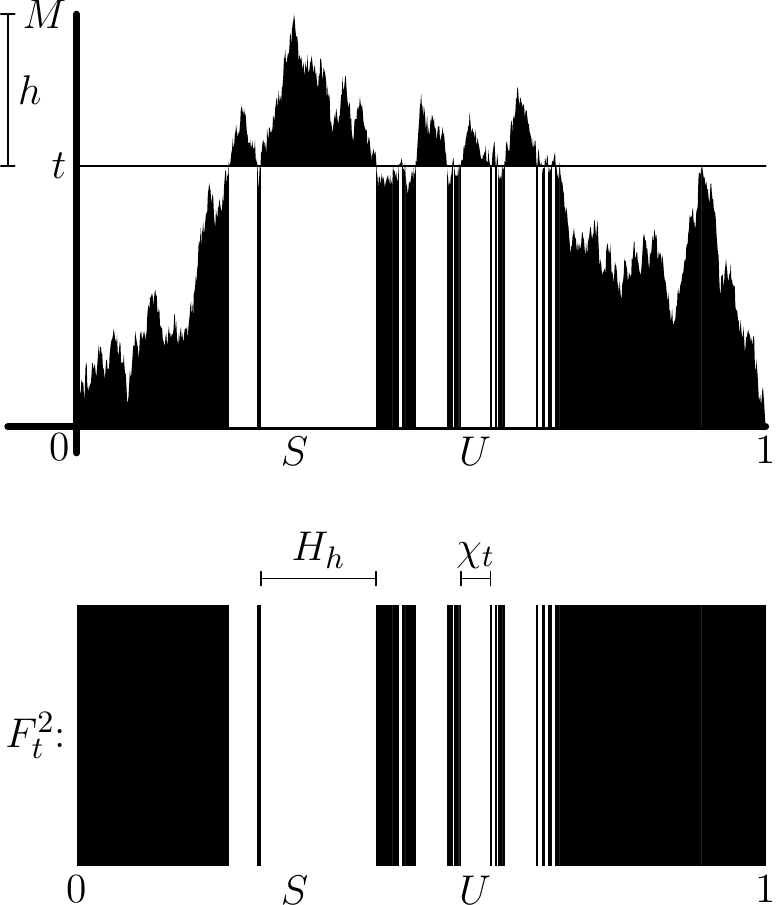}
\caption{Visualization of $F^2_t$}
\label{fragpic}
\end{figure}
\subsection{The tree interpretation}
As explained in Section 2 of  \cite{randomrealtrees}, given a nonnegative continuous function $\fun{f}{[0,1]}{\re_+}$ and such that $\imf{f}{0}=0$ (it will be referred to as the coding function,) a pointed metric space $\paren{\tau_f,d_f,\rho_f}$ belonging to the space of compact real trees can be constructed as follows:  define the pseudo-metric $d_f$ and the equivalence relation $\stackrel{f}{\sim}$ on $[0,1]$ by\begin{esn}
\imf{d_f}{s_1,s_2}=\imf{f}{s_1}+\imf{f}{s_2}-2\imf{m_f}{s_1,s_2},
\text{ where }\imf{m_f}{s_1,s_2}=\min_{r\in [s_1\wedge s_2,s_1\vee s_2]}\imf{f}{r}, 
\end{esn}and\begin{esn}
s_1\stackrel{f}{\sim} s_2\text{ if and  only if }\imf{d_f}{s_1,s_2}=0.
\end{esn}Then the quotient space $\tau_f=[0,1]/\stackrel{f}{\sim}$, with the induced distance (which will keep the notation $d_f$), is a compact real tree, to be rooted at  the equivalence class of $0$, denoted $\rho_f$. The locations of the local minima of $f$ code the nodes or branching points of $\tau_f$ (branching points disconnect the tree into more than two parts when removed). The equivalence classes of $s_1$ and $s_2$ ($s_1<s_2$) will branch from a common node, say $[s]$ satisfying $s_1< s< s_2$, exactly when $f$ is greater than $\imf{f}{s}$ on $[s_1,s_2]$. Finally, the height of an element of the tree is its $d_f$-distance to the root $\rho_f$ of the tree.  A visualization of the tree coded by a continuous function is given in Figure \ref{treeCoded}.
\begin{figure}
\includegraphics{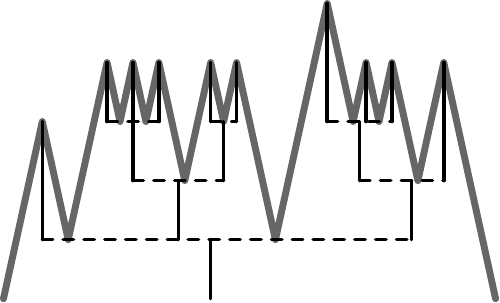}
\caption{Tree coded by a continuous function. (Join the slashed horizotal lines.)}
\label{treeCoded}
\end{figure}When the function $f$ is replaced by a random continuous function, such as $\e$, it gives rise to a random tree. Here, we see that the image under the canonical projection from $[0,1]$ to $[0,1]/\stackrel{\e}{\sim}$ gives a bijection between $F_t$ and the elements of $\tau_{\e}$ of height greater than $t$.

\subsection{The $\alpha$-stable height fragmentation}

A generalization of the Brownian height fragmentation to stable L\'evy processes of index $\alpha\in (0,2)$ is constructed from what has been termed the height process of the normalized $\alpha$-stable excursion for $\alpha\in (1,2]$. Firstly, the normalized $\alpha$-stable excursion was introduced by Chaumont in \cite{excursionNormaliseeStable}: given $\alpha\in (1,2]$, let $X$ be a spectrally positive L\'evy process of index $\alpha$ and define its cumulative infimum process $\underline X$ by\begin{esn}
\underline X_t=\inf_{s\leq t}X_s.
\end{esn}Define also $\underline g_t$ and $\underline d_t$ as the last (respectively first) instant before (respectively after) $t$ for which $X$ equals $\underline X$ and consider the normalized $\alpha$-stable excursion $\e^\alpha$ defined by\begin{esn}
\e^\alpha_s=\frac{1}{\paren{\underline d_t-\underline g_t}^{1/\alpha}}\paren{X-\underline X}_{\underline g_t+s\paren{\underline d_t-\underline g_t}}.
\end{esn}Since $\abs{B}$ has the same law as $B-\underline B$, the two constructions of the normalized Brownian excursion coincide. Second, Duquesne and Le Gall have defined in \cite{duquesnelegall} the height process $\mathbf{H}$ of $\e^\alpha$ as a continuous modification of\begin{esn}
u\mapsto \lim_{\eps\to 0}\int_0^u \indi{\overline\e^{u,\alpha}_s-\e^{u,\alpha}_s\leq \eps}\, ds
\end{esn}where $\e^{u,\alpha}$ is  given by $\e^{u,\alpha}_s=\e^\alpha_u-\e^\alpha_{\paren{u-s}-}$ for $s\leq u$ and $\overline \e^{u,\alpha}$ is the cumulative supremum process of $\e^{u,\alpha}$ given by\begin{esn}
\overline \e^{u,\alpha}_t=\sup_{s\leq t} \e^{u,\alpha}_s.
\end{esn}Then the $\alpha$-stable height fragmentation $F^\alpha$ is defined by\begin{esn}
F^\alpha_t=\set{s\in (0,1):\mathbf{H}_s>t}.
\end{esn}In the Brownian case, the height process has the law  of $2\e$, which implies the equivalence of the two definitions up to a multiplicative factor. We might also visualize this height fragmentation in terms of a fragmentation of the tree coded by $\mathbf{H}$, which is called the $\alpha$-stable tree (of size $1$), since this process is continuous. This self-similar fragmentation process has been introduced by Miermont in \cite{heights}. There are other ways of fragmenting this tree than by cutting down what is below a given height, Miermont introduced in \cite{hubs} the fragmentation at nodes which has been studied for other L\'evy trees by Abraham and Delmas in \cite{fragmentationUsingSnake}. 

In both cases, we can change the state space of the fragmentation from the space of open sets of $(0,1)$ to the space of partitions of $\na$, therefore providing a discrete framework for our investigations. This point of view was used by Bertoin in \cite{ssfrag} to construct and characterize self-similar fragmentations in terms of the self-similarity index, the erosion coefficient, and the dislocation measure analogously to the L\'evy-It\^o decomposition of L\'evy processes. In our case, thanks to \cite{ssfrag} and \cite{heights}, the self-similarity index is related to the index of the stable L\'evy process we are working with and is equal to $-1/2$ for the Brownian height fragmentation and equal to $1/\alpha-1\in (-1/2,0)$ for the $\alpha$-stable height fragmentation. The erosion coefficient is zero in both cases while the dislocation measure, defined on the set $\sfleche=\set{\mathbf{s}=\paren{s_1,s_2,\ldots}:s_1\geq s_2\geq \cdots\geq 0, \sum s_i\leq 1}$, charges only the set $\set{s_1>s_2>0,s_1+s_2=1}$ in the Brownian case and is characterized, in \cite[p. 340]{ssfrag}, by\begin{esn}
\imf{\nu_\e}{s_1\in dx}=\frac{2}{\sqrt{2\pi x^3\paren{1-x}^3}}\indi{x\in [1/2,1)}\,dx.
\end{esn}In the stable case, the dislocation measure is decribed, in \cite[p. 426]{heights}, in terms of a stable subordinator $\paren{T_t}_{t\in [0,1]}$ with Laplace exponent $\lambda\mapsto \lambda^{1/\alpha}$. Let $\Delta T_{[0,1]}$ denote the sequence of the jumps of $T$ on $[0,1]$ ranked in decreasing order; then the dislocation measure of the $\alpha$-stable height fragmentation is\begin{esn}
\imf{\nu_{-}}{d\mathbf{s}}=\alpha^2\frac{\imf{\Gamma}{2-1/\alpha}}{\imf{\Gamma}{2-\alpha}}\esp{T_1;\frac{\Delta T_{[0,1]}}{T_1}\in d\mathbf{s}}.
\end{esn}

\subsection{The results}
A simple process tied to any self-similar interval fragmentation is the mass of a tagged fragment. Instead of tracking down the behavior of the whole fragmentation process, we select the interval that contains an independent uniform random variable, with the objective of recording how its mass is lost. As Bertoin proves in \cite{ssfrag} passing to the space of partitions of $\na$ and using the results of \cite{homfrag}, the mass of the tagged fragment is a decreasing and  positive self-similar Markov process which can be therefore represented in terms of a L\'evy process using the  Lamperti transformation. %Should include a reference HEREHEREHEREHERE.
Information about the erosion coefficient and the dislocation measure can sometimes be inferred from the study of a tagged fragment. A tagged fragment is defined as follows: let  $U$ be a uniform  random variable on $(0,1)$ independent of the fragmentation process $F^\alpha$, a tagged fragment at time $t$ is the component of $F^\alpha_t$ that contains $U$. Its size, will be denoted by $\chi_t$. Our first result gives us some information on how a tagged fragment falls appart; specifically, since it is proved in \cite{ssfrag} that the mass of a tagged fragment decreases only by jumps, we study their sizes. Let $\beta=1-1/\alpha \in (0,1/2]$.
\begin{teo}
\label{teosaltos}
The law of the decreasing rearrangement of the absolute values of the jumps of the mass of a tagged fragment of  $F^\alpha$ is the two-parameter Poisson-Dirichlet distribution with parameters $\paren{\beta,\beta}$.
\end{teo}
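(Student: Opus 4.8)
The plan is to prove this at the level of positive self-similar Markov processes, where it becomes a statement about a stable subordinator that is then settled by the Pitman--Yor theory of the two-parameter Poisson--Dirichlet family; in the Brownian case a more hands-on excursion argument is also available. As a first reduction: since the erosion coefficient of $F^\alpha$ vanishes, the mass $\chi$ of a tagged fragment decreases only by jumps (this is in \cite{ssfrag}); as $\chi_0=1$ and $\chi$ is eventually $0$, one has $\sum_s\abs{\Delta\chi_s}=1$, so the decreasing rearrangement of $\paren{\abs{\Delta\chi_s}}_s$ is exactly the decreasing rearrangement of the lengths of the connected components of $(0,1)\setminus\overline{\{\chi_t:t\geq 0\}}$. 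Since $\chi$ is moreover a positive self-similar Markov process (of the same index $-\beta$ as $F^\alpha$), Lamperti's transformation realizes it, up to a continuous strictly increasing time change onto $[0,\infty)$, as $\exp\paren{-\xi}$ for a subordinator $\xi$, where $\zeta:=\int_0^\infty e^{-\beta\xi_s}\,ds$ is the (a.s.\ finite) absorption time at $0$. Because the time change is continuous and onto, the jumps of $\chi$ and those of $\exp\paren{-\xi}$ are in size-preserving bijection, so it suffices to determine the law of the ranked jumps of $\exp\paren{-\xi}$.

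Next I would identify $\xi$. By the expression of the Lamperti exponent of a tagged fragment in terms of the erosion and dislocation data \cite{homfrag,ssfrag}, $\xi$ has no drift, is not killed, and has Laplace exponent $\imf{\Phi}{q}=\int_{\sfleche}\paren{1-\sum_i s_i^{\,q+1}}\imf{\nu}{d\mathbf{s}}$, with $\nu=\nu_\e$ in the Brownian case and $\nu=\nu_-$ in the stable case. Substituting the measures recalled in the Introduction and performing the integration --- in the Brownian case by analytic continuation of the Beta function (the pole of $\Gamma$ at $-1$ forcing $B\paren{-1/2,-1/2}=0$), in the stable case by the Palm formula for the Poissonian jumps of $T$ together with the Mellin value $\esp{T_1^{\,\beta}}=\imf{\Gamma}{2-\alpha}/\imf{\Gamma}{1/\alpha}$ --- one gets, in all cases and up to a positive constant absorbed into the time change,
\begin{equation*}
\imf{\Phi}{q}=c_\alpha\,\frac{\imf{\Gamma}{q+\beta}}{\imf{\Gamma}{q}}.
\end{equation*}
This is precisely the Lamperti exponent of the process $t\mapsto 1/(1+S_t)$, where $S$ is a $\beta$-stable subordinator started at $0$; equivalently, in the finite-lifetime parametrization, $\exp\paren{-\xi}$ has, after a time change, the law of the opposite of a $\beta$-stable subordinator issued from $1$ and conditioned via a Doob $h$-transform to hit $0$ continuously --- which for $\alpha=2$ is the Aldous--Pitman representation of \cite{sacoalescent}.

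Finally, the jumps of $1/(1+S_t)$ are, through the change of variable $r\mapsto 1/(1+r)$, the lengths of the complementary intervals, inside a bounded interval, of the image of the range of a $\beta$-stable subordinator. Pitman and Yor's derivation of the two-parameter Poisson--Dirichlet law from a stable subordinator --- equivalently, its description as the ranked normalized jumps on $[0,1]$ of a $\beta$-stable subordinator whose path law has been biased by a power of its time-$1$ value, the power here corresponding to the second parameter $\beta$ --- then identifies this ranked sequence as $\pd\paren{\beta,\beta}$, which is the assertion of the theorem. For $\alpha=2$ one can argue instead directly on the normalized excursion: the jumps of $\chi$ are exactly the masses (for the mass measure on $\tau_\e$) of the bushes grafted along the spine of $\tau_\e$ from the root to the leaf $[U]$, and the Poissonian spinal decomposition of the Brownian CRT --- or Bertoin's remark that $F^2$ differs from the Aldous--Pitman fragmentation only through the self-similarity index, combined with the computation of \cite{sacoalescent} --- shows these ranked masses to be $\pd\paren{1/2,1/2}$.

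I expect the real difficulty to lie in the stable case of the second step: extracting the closed form of $\Phi$ from $\nu_-$, an $\esp{T_1;\,\cdot}$-type measure of infinite total mass, requires a genuine limiting argument, since $\int\paren{1-\sum_i s_i^{q+1}}$ and the moments of $T$ that enter the computation are each infinite and only their combination converges; one must then match the resulting subordinator --- including the vanishing of drift and killing and the precise constant in the time change --- to the Pitman--Yor object so that the $\pd\paren{\beta,\beta}$ identification carries over without loss. The Brownian route shifts the difficulty to controlling the excursion-theoretic spinal decomposition, which is why the general statement is cleanest through the Lamperti representation.
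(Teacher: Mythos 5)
Your architecture is sound and, in its final step, genuinely different from the paper's. For the general case the paper conditions the tagged fragment on its absorption time: by Proposition \ref{conditionedsubordinator} the conditional law is that of a bridge of a $\beta$-stable subordinator from $1$ to $0$, and the Perman--Pitman--Yor sampling formula (2.d) of \cite{permanpitmanyor} then yields, after integrating out the absorption time, the independent $\mathrm{Beta}\paren{1-\beta,\beta+n\beta}$ stick-breaking factors, i.e.\ $\pd\paren{\beta,\beta}$; in the Brownian case the paper instead uses the Bertoin--Pitman transformation $\e-K^U$ of \cite{pathtrans}, turning the jumps into excursion lengths of a reflected Brownian bridge. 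You instead identify the Lamperti exponent $c_\alpha\Gamma\paren{q+\beta}/\Gamma\paren{q}$ (correct, and note you need not recompute it from $\nu_-$: the L\'evy measures are quoted in Section \ref{representationtaggedfragment} from \cite{ssfrag} and \cite{heights}, so the ``real difficulty'' you anticipate is already in the literature) and match the tagged fragment, up to a time change, with $t\mapsto 1/\paren{1+S_t}$ for an \emph{unconditioned} $\beta$-stable subordinator $S$, generalizing the Aldous--Pitman representation of \cite{sacoalescent} to all $\alpha$. Since the time change preserves jumps, the theorem reduces to: the ranked jumps of $1/\paren{1+S}$ follow $\pd\paren{\beta,\beta}$.

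That last reduction is where there is a genuine gap as written. The tilted-jumps description you invoke (ranked normalized jumps of $S$ on $[0,1]$ under the law biased by a power of $S_1$) does not apply as stated: the jumps of $1/\paren{1+S}$ are $\Delta S_t/\paren{1+S_{t-}}\paren{1+S_t}$ taken over the whole half-line, not normalized jumps over a fixed time interval, and passing from one object to the other is precisely the missing argument. The claim is true and can be closed, for instance, by noting that $r\mapsto r/\paren{1+r}$ (your map after the reflection $u\mapsto 1-u$) sends the closed range of $S$ onto the zero set of a Bessel bridge of dimension $2\paren{1-\beta}$ --- this follows from the representation $u\mapsto\paren{1-u}R_{u/\paren{1-u}}$ of the Bessel bridge, a consequence of time inversion --- after which the ranked gap lengths are $\pd\paren{\beta,\beta}$ by the result the paper quotes as (16), p.~860 of \cite{poissondirichlet}; alternatively, condition on the absorption time and use the subordinator-bridge description together with \cite{permanpitmanyor}, which is exactly the paper's route. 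Note the trade-off: the paper's computation is self-contained modulo the Perman--Pitman--Yor formula and delivers the Bessel-bridge excursion-length result (with Proposition \ref{localTimeBesselBridge}) as a by-product, whereas your route consumes that result, or an equivalent of it, as an input. Your Brownian alternative (masses of the bushes along the spine to $[U]$, or Aldous--Pitman plus Bertoin's index-change remark) is correct in spirit but again differs from the paper's $\e-K^U$ argument, and the spinal route still requires handling the conditioning of the tree by its total size.
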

Further information regarding the two-parameter Poisson-Dirichlet distribution is found in the survey paper \cite{poissondirichlet}. Although the preceding theorem might be thought to be a consequence of general results on L\'evy trees (like the ancestral line decomposition of \cite{levytrees}), let us remark that the stable tree we are considering is conditioned by its size, while most of the results pertaining L\'evy trees work for the unconditioned measures.

The preceding theorem is analyzed in Section \ref{fallingapart}. In the Brownian case, it is explained by a visual argument relying on a path transformation between the normalized Brownian excursion and the Brownian bridge introduced in \cite{pathtrans}. Also, its connection to a fragmentation obtained by obliteration of ancestral lines in the CRT and the coagulation and fragmentation operators of Dong-Goldschmidt-Martin (cf. \cite{coagfrag}) is mentioned. This is used to give a Brownian construction of a self-similar fragmentation with zero erosion, self-similarity index one and whose dislocation measure is the Poisson-Dirichlet distribution of parameters $1/2$ and $1/2$. Regarding the general case, during the peer review process, Haas, Pitman and Winkel obtained Theorem \ref{teosaltos} in the discrete framework of partitions of $\na$ in \cite{haasPitmanWinkel}. We handle Theorem \ref{teosaltos} by the use of a formula by Perman, Pitman and Yor (see \cite[Formula (2.d), Theorem 2.1]{permanpitmanyor}) pertaining the law of the jumps of subordinators in conjunction with a description of the conditional law of a tagged fragment given its death time, which is done in this paper. The relevance of subordinators to the study of a tagged fragment does not come directly form the subordinators attached to a tagged fragment by the Lamperti transformation alluded to above, but from the following relationship between a tagged fragment and a stable subordinator of index $\beta$. 
\begin{pro}
\label{conditionedsubordinator}
The mass of the tagged fragment of $F^\alpha$ has the same law as the opposite of a stable subordinator of index $\beta$ starting at one and conditioned to die at zero. 
\end{pro}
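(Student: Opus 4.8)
The plan is to view both sides of the asserted identity as positive self-similar Markov processes of index $-\beta$, started from $1$, which are nonincreasing and reach $0$ in finite time and continuously. By Lamperti's representation theorem such a process is determined by its index together with the law of the associated L\'evy process; since in both cases the process decreases to $0$ with no final jump (no killing) and has no absolutely continuous part (the erosion vanishes), that L\'evy process is, in each case, minus a subordinator, and it is enough to check that the two subordinators have the same Laplace exponent.

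For the tagged fragment this subordinator is already described by Bertoin's analysis recalled above: $\chi$ is a positive self-similar Markov process of index $-\beta$ whose Lamperti subordinator has zero drift and L\'evy measure $\imf{\Pi}{dy}=\int_{\sfleche}\imf{\nu}{d\mathbf s}\sum_i s_i\,\indi{-\log s_i\in dy}$, obtained from the dislocation measure by size-biasing. One then evaluates this for the explicit $\nu$. In the Brownian case one substitutes $x=e^{-y}$ directly in $\nu_\e$. For $\alpha\in(1,2)$ one uses the representation of $\nu_-$ through the stable subordinator $T$ with Laplace exponent $\lambda\mapsto\lambda^{1/\alpha}$: the size-biasing weight cancels the factor $T_1$, the L\'evy--It\^o (Palm) description of the jumps of $T$ on $[0,1]$ collapses the sum over $i$ to a single integral against the L\'evy measure of $T$ and an independent copy of $T_1$, and a change of variables of the same kind as in the Brownian case yields $\imf{\Pi}{dy}=c_\beta\,e^{y}(e^{y}-1)^{-1-\beta}\,dy$ on $(0,\infty)$ for an explicit constant $c_\beta$; equivalently the Laplace exponent is a constant multiple of $\imf{\Gamma}{q+\beta}/\imf{\Gamma}{q}$.

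Next one recognizes this as the subordinator coming from the other side of the identity. The ``bare'' process $t\mapsto 1-\sigma_t$, with $\sigma$ a stable subordinator of index $\beta$, is by the scaling of $\sigma$ a positive self-similar Markov process of index $-\beta$, but it leaves $(0,\infty)$ by overshooting $0$, so its Lamperti representation carries a killing term; one computes its L\'evy measure to be proportional to $e^{\beta y}(e^{y}-1)^{-1-\beta}\,dy$ and its killing rate to be an explicit constant. Conditioning it to reach $0$ continuously is the Doob $h$-transform by the exponential $e^{-q\,\cdot}$ for the unique $q>0$ that makes it harmonic for the killed Lamperti L\'evy process (one finds $q=1-\beta$); this transform cancels the killing and multiplies the L\'evy measure by $e^{(1-\beta)y}$, producing precisely a subordinator with L\'evy measure proportional to $e^{y}(e^{y}-1)^{-1-\beta}\,dy$. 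Comparing with the previous paragraph (after fixing the normalization of $\sigma$, or up to the deterministic linear time change relating normalizations) and invoking the uniqueness in Lamperti's theorem completes the argument. In the Brownian setting the same conditioning can be made visible directly, by splitting $\e$ at the tagged point $U$ and reading $\chi_t$ as the sum of the first passage times below level $t$ of the two halves of the excursion at $U$.

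The main obstacle is the third step. The event ``$1-\sigma$ reaches $0$ continuously'' is $\sigma$-null, so this conditioning must be set up as a limit of $h$-transforms, with the correct harmonic function identified, in the manner familiar from the theory of self-similar Markov processes conditioned to stay positive or to hit $0$; and the constants in the two L\'evy measures must then be tracked carefully so that one obtains equality in law and not merely equality up to a time change. The change-of-variables computation of the second paragraph is routine but sensitive to these constants.
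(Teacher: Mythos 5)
Your proposal is correct, and it reaches the proposition by a genuinely different route from the paper's. The paper's general proof also pivots on Lamperti's representation and on the L\'evy measure $e^{x}\paren{e^{x}-1}^{-1-\beta}$, but it never touches the conditioned process directly: it quotes the computations of \cite{ssfrag} and \cite{heights} for the tagged fragment's subordinator (which you rederive from the size-biased dislocation measure via the Palm formula), observes that applying Lamperti to that same subordinator with the opposite sign of the index produces a process in resolvent duality with the tagged fragment --- namely the $\beta$-stable subordinator, once one fixes $C=2\sqrt 2$ for $\beta=1/2$ and $C=\imf{\Gamma}{1-\beta}/\imf{\Gamma}{2-\beta}$ otherwise --- and then concludes via the duality and time-reversal argument of \cite{entrancelaws} together with Nagasawa's theorem that $\chi^{\alpha}$ is $-T$ started at one and conditioned to die at zero; it also records (Lemma \ref{determinedbydeathtime}) that a decreasing positive self-similar Markov process is determined by its index and the law of its absorption time, a shortcut that avoids identifying the L\'evy measure altogether. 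Your route instead computes the Lamperti data of the conditioned subordinator head-on and matches L\'evy measures, in the spirit of \cite{caballerochaumontsuborestable}; this buys an explicit, self-contained description at the cost of the verifications you list at the end, which in the present setting are smaller than you fear: the paper \emph{defines} ``conditioned to die at zero'' as the Doob transform of $-\sigma$ by the potential density $\imf{\hat u}{\cdot,0}\propto x^{\beta-1}$, which is exactly your $h(x)=x^{-(1-\beta)}$, so no limiting procedure is needed; the martingale property of $e^{-(1-\beta)\xi}$ for the killed Lamperti L\'evy process reduces, after the substitution $s=1-e^{-y}$, to $\int_0^1\paren{(1-s)^{\beta-1}-1}s^{-1-\beta}\,ds=1/\beta=\int_1^\infty u^{-1-\beta}\,du$, i.e.\ the exponential tilt exactly compensates the killing rate (note that $x^{\beta-1}$ is only excessive, not invariant, in the original time scale --- it is the state-dependent clock that reconciles the two, and you should also justify that the $h$-transform commutes with the Lamperti time change, e.g.\ by extending the density formula to the time-change stopping times); finally, your normalization caveat is genuine and is precisely how the paper phrases its proof, through the constant $C$ above. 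The paper's Brownian proof is different again --- Bismut's decomposition of It\^o's measure plus Williams' time reversal, with finite-dimensional densities computed explicitly --- and is essentially the excursion-splitting picture you sketch in one sentence.
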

 The stochastic process referred to in the statement  is a Doob transform of a stable subordinator via its potential density and is described as follows: let $\sigma$ be a stable subordinator of index $\beta$ (the construction of the conditioned subordinator allows for $\beta\in (0,1)$, but the reader should keep in mind that for us, $\beta=1-1/\alpha\in (0,1/2]$), with Laplace exponent $\psi$ given by $q\mapsto Cq^{\beta}$ for nonnegative $q$. It is known that the law of $\sigma_t$ admits a density $f_t$ for positive $t$, for which there is no simple explicit expression except in the case $\beta=1/2$, and  that the potential operator of $\sigma$ admits a density $\imf{u}{x,y}=\imf{u}{0,y-x}$ given explicitly by $\imf{u}{0,y}=\indi{y\geq 0}/C\imf{\Gamma}{\beta}y^{1-\beta}$. Since $\sigma$ and $-\sigma$ are in duality with respect to Lebesgue measure, it follows that the potential density of $-\sigma$ is $\imf{\hat{u}}{y,x}=\imf{u}{x,y}$. Also, $\imf{h}{x}=\imf{\hat{u}}{x,0}$ is a potential for the semigroup of $-\sigma$, since if $\set{\hat{P}_t:t\geq 0}$ denotes the semigroup of $-\sigma$ then\begin{equation}
\label{deathtimedistribution}
\imf{\hat{P}_t h}{x}=\int_t^\infty \imf{f_s}{x}\, ds\to 0\quad\paren{t\to\infty}.
\end{equation}So,  we might consider the Doob transformation of $-\sigma$ by $h$, $-\sigma^h$, and we shall denote its sub-Markovian family of distributions by $\set{\hat{\p}^h_x:x>0}$. Under $\hat{\p}^h_x$, the process dies almost surely in finite time because of the potential character of $h$: the left-hand side of \eqref{deathtimedistribution} divided by $\imf{h}{x}$ is the probability that, starting at $x$, the death-time $\zeta^h$ of $-\sigma^h$ is greater than $t$. The $h$-path process $-\sigma^h$ will be called the opposite of a stable subordinator of index $\beta$ starting at $x$ and conditioned to die at zero; the interpretation is justified because $-\sigma^h_{\zeta^h-}=0$ almost surely (see \cite[Sect. 4 Prop. 2]{chaumontpathdecomp}). This process was introduced, in a more general context, in \cite{chaumontpathdecomp}. There, the author obtains the behaviour near death-time, relying on the classification of coharmonic and coinvariant functions for L\'evy processes of \cite{silversteincoharmonic} and therefore, we shall present a way of obtaining it in Section \ref{representationtaggedfragment} by an approach closer to the techniques used in this paper, namely, the use of the Markovian bridges introduced in \cite{markovbridges}. The interested reader is referred to \cite{caballerochaumontsuborestable} for expressions of the infinitesimal generator and other aspects of the conditioned subordinator, but we will supply all the necessary tools to study this process.

We will prove Proposition \ref{conditionedsubordinator} by a visual analysis relying on It\^o measure of positive excursions in the Brownian case. The general case can be based on two different analyses pertaining positive self-similar Markov processes: the first one relies on the identification of the subordinator associated to a tagged fragment by Lamperti's representation, performed in the brownian case in \cite{ssfrag} and in the stable case in \cite{heights} as well as the duality considerations involving positive self-similar Markov processes of \cite{entrancelaws}. However, it is also shown how to bypass these identifications by use of the characterization of the death-times of the fragmentations performed by Duquesne and Le Gall (a consequence of \cite[Theorem 3.3.3]{duquesnelegall}) and the basic formula for the moments of exponential functionals of subordinators found in \cite[Formula (4), p. 194]{surveyexponentialfunctionals}.

Coming back to Theorem \ref{teosaltos}, since the Poisson-Dirichlet distribution with parameters $\paren{\beta,\beta}$ arises as the distribution of the ranked lengths of the excursions of a Bessel bridge of dimension $2\paren{1-\beta}$ starting and ending at zero (see \cite[(16), p. 860]{poissondirichlet} and the references therein), we shall link Theorem \ref{teosaltos} with Proposition \ref{conditionedsubordinator} in the following result which was suggested by them, but is actually independent and appears to be new. Together with  the analysis of Theorem \ref{teosaltos} and Proposition \ref{conditionedsubordinator}, it gives a different proof of the aformentioned result on the ranked lengths of excursions of Bessel bridges. 
\begin{pro}
\label{localTimeBesselBridge}
The inverse local time at zero of a $2\paren{1-\beta}$-dimensional Bessel bridge of length one starting and ending at zero is a stable subordinator of index $\beta$ starting at zero and conditioned to die at $1$. 
\end{pro}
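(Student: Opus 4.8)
The plan is to derive the statement by combining two standard ingredients within the Markovian-bridge formalism of \cite{markovbridges}. The first ingredient is the classical identification of the inverse local time at zero of a $2(1-\beta)$-dimensional Bessel process $R$, started from zero under a law $\mathbb P_0$, with a stable subordinator of index $\beta$: the excursions of $R$ away from $0$ form a Poisson point process whose It\^o characteristic measure $n$ satisfies $n(\ell>t)=c\,t^{-\beta}$ by the scaling of BES$(2(1-\beta))$, so that, writing $L$ for the local time at $0$ in its usual normalization, the inverse $\tau:=L^{-1}$ is the pure-jump increasing process $\tau_s=\sum_{s'\le s}\ell(e_{s'})$, a stable subordinator of index $\beta$ with Laplace exponent $q\mapsto Cq^{\beta}$ and hence, as recalled before Proposition~\ref{conditionedsubordinator}, potential density $u(0,y)=\mathbf{1}_{\{y\ge0\}}/\bigl(C\Gamma(\beta)y^{1-\beta}\bigr)$. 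The second ingredient is the description of the Bessel bridge from $0$ to $0$ over $[0,1]$: following \cite{markovbridges}, its law $\mathbb P^{\mathrm{br}}$ is characterized by having on $\mathscr F_t$, for $t<1$, the density $q_{1-t}(R_t,0)/q_1(0,0)$ with respect to $\mathbb P_0$, where $q$ is the transition density of $R$ symmetrized by its speed measure; a short computation gives $q_{1-t}(0,0)\propto(1-t)^{\beta-1}$, and since $\mathbb P^{\mathrm{br}}$ and $\mathbb P_0$ are locally equivalent on $[0,1)$ the bridge has a local time at $0$ equal to the additive functional $L$, so the normalizations on the two sides of the asserted identity automatically agree.

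I would then read the bridge off along the inverse-local-time clock. For each $s$ the time $\tau_s$ is an $(\mathscr F_t)$-stopping time with $R_{\tau_s}=0$ on $\{\tau_s<1\}$, and it is measurable with respect to $\mathscr H_s:=\sigma(\tau_r:r\le s)$; restricting the bridge density to $\mathscr F_{\tau_s}$ and using the strong Markov property of $R$ at $\tau_s$ gives, on the event $\{\tau_s<1\}$,
\begin{esn}
\left.\frac{d\mathbb P^{\mathrm{br}}}{d\mathbb P_0}\right|_{\mathscr F_{\tau_s}}=\frac{q_{1-\tau_s}(0,0)}{q_1(0,0)}=\frac{h(\tau_s)}{h(0)},\qquad h(x):=(1-x)^{\beta-1}=C\Gamma(\beta)\,u(0,1-x).
\end{esn}
Since the right-hand side depends on $\tau_s$ alone and $\tau$ is, under $\mathbb P_0$, the stable subordinator of the first ingredient, this identity says exactly that under $\mathbb P^{\mathrm{br}}$ the process $\tau$ is the Doob $h$-transform, by the function $h$, of that subordinator killed on first entering $[1,\infty)$. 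Now $h$ is, up to the constant $C\Gamma(\beta)$, the potential $u(0,1-\cdot)$ and is therefore excessive for the killed semigroup, with $P^{[0,1)}_t h\to 0$; and under the reflection $x\mapsto 1-x$ it is precisely the coinvariant function used to build the conditioned subordinator in the paragraph following Proposition~\ref{conditionedsubordinator}. Hence the Doob transform in question is, by definition, the stable subordinator of index $\beta$ started at $0$ and conditioned to die at $1$: it has a finite lifetime $\zeta$ with $\tau_{\zeta-}=1$ almost surely (cf.\ \cite[Sect. 4 Prop. 2]{chaumontpathdecomp}), the analytic counterpart of the constraint $R_1=0$. Letting $s$ increase to $\zeta$, the total local time of the bridge at $0$, and using that $\bigcup_s\mathscr H_s$ generates the relevant $\sigma$-field then yields the Proposition.

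The step I expect to be the main obstacle is making this passage rigorous: going from a conditioning of the path on the null event $\{R_1=0\}$ to a conditioning of the excursion Poisson point process, equivalently of $\tau$, is exactly what the Palm and splicing calculus of \cite{markovbridges} is designed to handle, and it is there that one must check that under $\mathbb P^{\mathrm{br}}$ no excursion straddles time $1$ (so that $[0,1]$ is exactly tiled by the excursion intervals, with $1$ a left-accumulation point of zeros), that the bridge still possesses a local time at $0$ whose inverse is a left-continuous pure-jump process, and that the one-parameter family of identities on the events $\{\tau_s<1\}$ genuinely determines the law of the conditioned subordinator on all of $[0,\zeta)$ and not merely its finite-dimensional distributions. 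A secondary, purely bookkeeping point is to keep the constant $C$ consistent on the two sides. As a sanity check, for $\beta=1/2$ the statement specializes to the assertion that the inverse local time at $0$ of the reflected Brownian bridge is the $1/2$-stable subordinator conditioned to die at $1$, whose jumps are the lengths of the Brownian bridge excursions, recovering the classical arcsine-type description.
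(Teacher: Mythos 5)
Your proposal is correct and takes essentially the same route as the paper: local absolute continuity of the Bessel bridge with respect to the Bessel process on $\F_s$ for $s<1$, extension of the density to the stopping times $\tau_s$ (where the spatial factor disappears because $X_{\tau_s}=0$), and identification of the resulting density $h(\tau_s)/h(0)$, with $h(x)\propto (1-x)^{\beta-1}$, as the Doob transform defining the $\beta$-stable subordinator conditioned to die at $1$. The extra excursion-theoretic machinery you flag as the main obstacle (Palm/splicing calculus, tiling of $[0,1]$ by excursion intervals) is not needed: the optional-stopping extension of the Radon--Nikod\'ym density as in \cite[VIII.1.3]{revuzyor}, which you already invoke, directly yields the Markovian transition density of $\tau$ under the bridge law, and this is exactly how the paper concludes.
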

Our last results concern limit theorems for the Brownian height fragmentation at the moment where it reduces to dust. To be more specific, let us note that, because of the continuity of $\e$, the first level $t$ at which $F^2_t=\emptyset$ exists, is finite and equal to the maximum of $\e$, denoted by $M$. Secondly, when we replace the deterministic level $t$ by the random one $M-t$ in \eqref{browfragdef}, we obtain a random variable with values in $\mc{V}$ which will be denoted $\hat{F}^2_t$.  Consider  two independent realizations $R$ and $R'$ of the Bessel process of dimension three starting at zero and set $Z_t=R_t$ if $t>0$ and $R'_{-t}$ if $t<0$. Finally, let $S\in (0,1)$ be the almost surely unique location of the maximum $M$, then the following holds:
\begin{teo}
\label{asymptotics}
As $t\to 0+$, the stochastic process\begin{esn}
\paren{\frac{\hat{F}^2_{rt}-S}{t^2}}_{r\geq 0}
\end{esn}with values in the set of open sets of $\re$ converges in distribution to \begin{esn}
\paren{\set{s\in\re:Z_s<r}}_{r\geq 0}.
\end{esn}
\end{teo}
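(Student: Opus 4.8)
The starting point is the elementary identity obtained by writing a point of $(0,1)$ as $s=S+ut^2$: setting
\begin{esn}
Z^{(t)}_u:=\frac{M-\e_{S+ut^2}}{t},\qquad I_t:=\paren{-S/t^2,\ (1-S)/t^2},
\end{esn}
we have, since $s\in\hat F^2_{rt}$ means exactly $\e_s>M-rt$, i.e. $Z^{(t)}_{(s-S)/t^2}<r$,
\begin{esn}
\frac{\hat F^2_{rt}-S}{t^2}=\set{u\in I_t:\ Z^{(t)}_u<r}.
\end{esn}
As $I_t\uparrow\re$ when $t\to 0$, the theorem follows once we show (i) $Z^{(t)}\to Z$ in distribution for the topology of uniform convergence on compact subsets of $\re$; (ii) the rescaled fragmentation does not lose mass at infinity; and (iii) the map sending a profile to the family of its strict sub-level sets is a.s.\ continuous at $Z$.

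For step (i) the plan is to use the path decomposition of the normalized Brownian excursion at the location of its maximum (Williams, Denisov): conditionally on $\paren{S,M}$, the two processes $v\mapsto M-\e_{(S-v)\vee 0}$ on $[0,S]$ and $v\mapsto M-\e_{(S+v)\wedge 1}$ on $[0,1-S]$ are independent Bessel$(3)$ bridges from $0$ to $M$, of lengths $S$ and $1-S$. A Bessel$(3)$ bridge of length $\ell$ is, on an initial interval $[0,\eta]$, absolutely continuous with respect to a Bessel$(3)$ process $R$ started at $0$, with Radon--Nikodym density $q_{\ell-\eta}\!\paren{R_\eta,M}/q_\ell\!\paren{0,M}$ ($q$ the Bessel$(3)$ kernel), which tends to $1$ in $L^1$ as $\eta\to 0$; combined with the scaling $\paren{c^{-1}R_{c^2v}}_v\stackrel{d}{=}\paren{R_v}_v$ this shows that, for each fixed $A$, the restriction of $Z^{(t)}$ to $[-A,A]$ converges in total variation, as $t\to 0$, to the restriction of the two-sided Bessel$(3)$ process $Z$ --- the two halves remaining conditionally independent given $\paren{S,M}$, and the limit not depending on $\paren{S,M}$. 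Letting $A\to\infty$ gives (i).

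For step (ii) one has, for every $r$, $\lim_{A\to\infty}\limsup_{t\to 0}\sip\paren{\exists\, s\in\hat F^2_{rt}:\ \abs{s-S}>At^2}=0$: on the macroscopic range $\abs{s-S}>\delta$ this is immediate, since the maximum of $\e$ is a.s.\ attained only at $S$, whence $\sup_{\abs{s-S}>\delta}\e_s<M$ a.s.\ and so $<M-rt$ for $t$ small; on the range $At^2<\abs{s-S}\le\delta$ it follows, through the decomposition of step (i) and absolute continuity on $[0,\delta]$ for $\delta$ small, from the transience estimate $\sip\paren{\inf_{v\ge\varepsilon}R_v\le a}=\se\!\paren{(a/R_\varepsilon)\wedge 1}$ applied with $\varepsilon=At^2$, $a=rt$, which by scaling equals $\se\!\paren{(r/R_A)\wedge 1}\to 0$. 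For step (iii), fix $r>0$: almost surely $Z$ attains no local extremum at height $r$ (the local-extremum values of a Bessel$(3)$ path form a countable, hence Lebesgue-null, set) and is nowhere locally constant, so $\set{u:Z_u<r}$ is a bounded open set equal to the interior of its closure, with boundary $\set{u:Z_u=r}$; hence a uniformly-on-compacts perturbation of $Z$ that stays large outside a large compact --- which is precisely what (i)--(ii) provide --- yields a nearby sub-level set. Since $r\mapsto\hat F^2_{rt}$ and $r\mapsto\set{u:Z_u<r}$ are both nondecreasing, convergence of the full path with values in the open subsets of $\re$ reduces to convergence of finite-dimensional distributions along a dense set of continuity levels $r$, which is delivered by (i)--(iii) via the continuous mapping theorem; this completes the argument.

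The delicate part I expect to be step (i): making the Bessel$(3)$-bridge versus Bessel$(3)$ comparison uniform enough --- in particular uniformly in $\paren{S,M}$ away from the endpoints of $[0,1]$ and over the growing windows --- to upgrade the one-sided, fixed-window convergence to the full two-sided, locally uniform statement, together with the intermediate-range estimate of step (ii); by contrast the continuous-mapping argument of step (iii) is soft.
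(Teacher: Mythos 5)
Your overall architecture (blow up around the maximum, compare the two halves with Bessel$(3)$ via absolute continuity, then pass to sub-level sets) is close in spirit to the paper, but the key statement in your step (i) is wrong as written, and it is exactly the step carrying the proof. Conditionally on $\paren{S,M}$, the two processes $v\mapsto M-\e_{\paren{S-v}\vee 0}$ and $v\mapsto M-\e_{\paren{S+v}\wedge 1}$ are \emph{not} Bessel$(3)$ bridges from $0$ to $M$ of lengths $S$ and $1-S$: each of them stays in $[0,M]$ and hits $M$ only at its terminal time, whereas a Bessel$(3)$ bridge from $0$ to $M$ exceeds $M$ before its endpoint with positive probability. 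What Williams' decomposition of the It\^o measure (conditioned by the length) actually gives is a pair of independent \emph{first-passage} bridges of the Bessel$(3)$ process to level $M$, i.e.\ the process conditioned to hit $M$ for the first time at time $S$ (resp.\ $1-S$). Consequently the Radon--Nikod\'ym density you use on an initial window, $q_{\ell-\eta}\paren{R_\eta,M}/q_\ell\paren{0,M}$ with $q$ the Bessel$(3)$ transition kernel, is the density of the wrong conditioning; the correct one involves the hitting-time densities of level $M$ (restricted to $\set{\eta<T_M}$). The conclusion of step (i) can be saved, because the hitting-time densities enjoy the same continuity and scaling and the corrected density still tends to $1$, but as it stands the decomposition lemma underpinning your argument is false, and the uniformity in $\paren{S,M}$ and in the growing windows that you yourself flag as the delicate point is left entirely open.

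For comparison, the paper avoids conditioning on $\paren{S,M}$ altogether: it first proves that the transformation $\e\mapsto\e^S$ with $\e^S_t=M-\e_{\paren{S+t}\mod 1}$ preserves the law of the normalized excursion (via Williams' description of $n_+$ at the maximum and his time reversal, i.e.\ the same ingredients you invoke, but used once and for all at the level of the law of $\e$). After this transformation the rescaled fragmentation around $S$ becomes the sub-level sets of the beginning and end of an excursion of length $1/t^2$ at \emph{deterministic} times, and the proof reduces to a Jeulin-type limit theorem: $\imf{\pi^v}{\imf{F}{X}\imf{G}{\tilde X}}\to\imf{\p_0^3}{F}\imf{\p_0^3}{G}$, established in total variation through the bridge decomposition of $\pi^v$ at fixed times and the explicit convergence of the bridge densities to $1$. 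Since that convergence is in total variation, no continuity of the profile-to-sub-level-set map is needed (your step (iii)), only its measurability; and because the topology on open subsets of $\re$ is determined by restrictions to bounded windows, no ``no mass at infinity'' estimate (your step (ii)) is required either. If you repair step (i) with first-passage bridges and make the comparison uniform, your route can work, but the path-transformation route removes precisely the conditioning and uniformity difficulties you identify.
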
Apart from studying the proper topology on the family of open sets of $\re$ in Section \ref{extinction}, the preceding theorem will be proved. This time, a path transformation leaving the law of $\e$ invariant suggested by B. Haas, who kindly allowed the author to present it here,  will be used to transform the problem into one concerning deterministic levels in lieu of the random ones, while a limit theorem for the normalized Brownian excursion similar to the one given  by Jeulin in \cite{jeulinsmg} will allow us to conclude. The proof of invariance of the law of $\e$ by Haas' path transformation, which is due to the author, will be done by means of William's description of the It\^o measure and his reversibility theorems for the three-dimensional Bessel processes. Biane provides in \cite{bianevervaat} an explanation by continuous-time methods of invariance  of the law of $\e$ under Vervaat's transformation (\cite{vervaattrans}) similar to the one we shall give. It might be of interest to relate our limit process to the fragmentation with immigration processes introduced by Hass in \cite{haasEquilibrium}: if $FI_t$ denotes the decreasing sequence of the lengths of the bounded connected components of $\set{s\geq 0:R_s>t}$, then $FI$ evolves as a self-similar fragmentation with immigration, in which a particle of mass $x$ arrives at the system at rate $1/\sqrt{2\pi}x^{3/2}$ (no two particles arriving at the same time) and breaks appart like a Brownian height fragmentation independently of the other particles in the system. Whether this relationship is useful for translating the known results of the fragmentation with immigration to our limit process is unknown to the author at present. A more immediate consequence of Theorem \ref{asymptotics} is:
\begin{cor}
\label{corasymptotic}
Let $M_t$ be the Lebesgue measure of $\hat{F}^2_t$ and $H_t$ be the Lebesgue measure of the interval of $\hat{F}^2_t$ that contains $S$. Then  $r\mapsto H_{rt}/t^2$ converges weakly as $t\to 0+$ to the increasing self-similar additive process with Laplace transform $q\mapsto \paren{\sqrt{2q}/\imf{\sinh}{\sqrt{2q}}}^2$ at time $1$ and $M_{t}/t^2$ has a limiting law with Laplace transform $q\mapsto \paren{1/\imf{\cosh}{\sqrt{2q}}}^2$.
\end{cor}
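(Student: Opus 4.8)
The statement follows from Theorem~\ref{asymptotics} once $H_{rt}/t^2$ and $M_t/t^2$ are recognized as (essentially) continuous functionals of the rescaled process and the Laplace transforms of the corresponding limits are computed. Writing $\lambda$ for Lebesgue measure on $\re$ and using that $\lambda$ is translation invariant and is multiplied by $t^{-2}$ under $x\mapsto x/t^2$, we have that $M_t/t^2=\lambda\paren{\paren{\hat{F}^2_t-S}/t^2}$ is the total Lebesgue measure of the rescaled set, while $H_{rt}/t^2$ is the Lebesgue measure of the connected component of $\paren{\hat{F}^2_{rt}-S}/t^2$ that contains $0$ (the translation sends $S$ to $0$). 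On the limiting open set $\set{s\in\re:Z_s<r}$, which contains $0$ since $Z_0=0$, the component through $0$ is the interval $\paren{-T'_r,T_r}$, where $T_r=\inf\set{s\ge 0:R_s\ge r}$ and $T'_r=\inf\set{s\ge 0:R'_s\ge r}$ are the first passage times above level $r$ of the two independent three-dimensional Bessel processes, so its length is $T_r+T'_r$; and the total Lebesgue measure of $\set{s\in\re:Z_s<r}$ is $A_r+A'_r$, with $A_r=\int_0^\infty\indi{R_s<r}\,ds$ and $A'_r$ its analogue for $R'$, both almost surely finite by transience. Granting the continuity of these functionals at the limit for the topology of Section~\ref{extinction}, the continuous mapping theorem turns Theorem~\ref{asymptotics} into $\paren{H_{rt}/t^2}_{r\ge 0}\Rightarrow\paren{T_r+T'_r}_{r\ge 0}$ and $M_t/t^2\Rightarrow A_1+A'_1$.

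It remains to describe the limits. By the strong Markov property of the Bessel process at the successive first passage times (the $\sigma$-field generated by $\paren{T_s}_{s\le r}$ being contained in $\ssa_{T_r}$), $r\mapsto T_r$ has independent increments, and Brownian scaling, $\paren{c^{-1}R_{c^2 t}}_{t\ge0}\stackrel{d}{=}\paren{R_t}_{t\ge0}$, gives $T_{cr}\stackrel{d}{=}c^2 T_r$; combined with the independence of $T$ and $T'$, this shows that $r\mapsto T_r+T'_r$ is an increasing self-similar additive process of exponent $2$. For its law at $r=1$, observe that $u\paren{x}=\sinh\paren{x\sqrt{2q}}/x$ is a positive solution on $\paren{0,\infty}$ of $\tfrac12 u''+x^{-1}u'=qu$, so $t\mapsto e^{-qt}u\paren{R_{t\wedge T_r}}$ is a bounded martingale; optional stopping (the lower endpoint contributing nothing since $P_x\paren{T_\eps<T_r}\to0$ as $\eps\downarrow0$) followed by letting the starting point tend to $0$ yields $\esp{e^{-qT_r}}=u\paren{0+}/u\paren{r}=r\sqrt{2q}/\sinh\paren{r\sqrt{2q}}$. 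Hence $\esp{e^{-q\paren{T_1+T'_1}}}=\paren{\sqrt{2q}/\sinh\paren{\sqrt{2q}}}^2$, and self-similarity of exponent $2$ determines the whole process law from this marginal.

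For $M_t/t^2\Rightarrow A_1+A'_1$ we compute the Laplace transform of $A_1$ by a Feynman--Kac argument. The function $g\paren{x}=\se_x\paren{\exp\paren{-q\int_0^\infty\indi{R_s<1}\,ds}}$ is the bounded, finite-at-$0$ solution of $\tfrac12 g''+x^{-1}g'=q\indi{x<1}g$ with $g\paren{\infty}=1$, the last condition because $P_x\paren{A_1>0}=P_x\paren{R\text{ visits }\paren{0,1}}=1/x\to0$. Solving separately on $\paren{0,1}$ and $\paren{1,\infty}$ gives $g\paren{x}=c\sinh\paren{x\sqrt{2q}}/x$ for $x\le1$ and $g\paren{x}=1+b/x$ for $x\ge1$, and imposing that $g$ and $g'$ be continuous at $x=1$ forces $c=1/\paren{\sqrt{2q}\cosh\paren{\sqrt{2q}}}$; therefore $\se_0\paren{e^{-qA_1}}=g\paren{0+}=c\sqrt{2q}=1/\cosh\paren{\sqrt{2q}}$, and squaring to include the independent copy $A'_1$ gives the announced Laplace transform of the limiting law of $M_t/t^2$.

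The only point that is not pure bookkeeping is the continuity assertion used above: the maps ``total Lebesgue measure'' and ``Lebesgue measure of the component through a marked point'' are evidently continuous on any bounded window, and the question is whether the mass of $\paren{\hat{F}^2_{rt}-S}/t^2$ outside a large interval $\paren{-A,A}$ is negligible, uniformly in small $t$. I expect to settle this by a truncation estimate: bound $\limsup_{t\to0}\esp{\lambda\paren{\paren{\paren{\hat{F}^2_{rt}-S}/t^2}\setminus\paren{-A,A}}}$ and show it vanishes as $A\to\infty$, which amounts to the statement that near its maximum the set $\set{s:\e_s>M-rt}$ concentrates around the argmax $S$ --- a fact that follows from the same decomposition of $\e$ at its maximum underlying the proof of Theorem~\ref{asymptotics}, and which is in any case subsumed in the construction of the topology in Section~\ref{extinction}.
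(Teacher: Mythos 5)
Your identifications and the resulting Laplace transforms agree with the paper's proof, which follows exactly the same route: the component of $\set{s\in\re:Z_s<r}$ through $0$ is the union of the two first-passage intervals, with length $T_r+T'_r$, and the total mass is the sum of the two occupation times of $(0,1)$. The only difference is that the paper simply cites the classical formula $\imf{\p^3_0}{e^{-qT_1}}=\sqrt{2q}/\imf{\sinh}{\sqrt{2q}}$ and the Ciesielski--Taylor identity (occupation time of $(0,1)$ by a three-dimensional Bessel process from $0$ equals in law the exit time of $(-1,1)$ by Brownian motion, whence $1/\imf{\cosh}{\sqrt{2q}}$), where you rederive both by the martingale $e^{-qt}\imf{u}{R_{t\wedge T_r}}$ and a Feynman--Kac smooth-fit computation; your computations are correct and self-contained, so this is a cosmetic rather than structural difference.

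The one step that does not stand as written is the continuity claim underlying your continuous-mapping argument for $M_t$. For the metric of Section \ref{extinction} (Hausdorff distance between complements, localized over windows), Lebesgue measure is only \emph{lower} semicontinuous, even on a bounded window: the open sets $V_n=(0,1)\setminus\set{k/n:0\leq k\leq n}$ converge to $\emptyset$ while $\imf{\lambda}{V_n}=1$. So ``total Lebesgue measure is evidently continuous on any bounded window'' is false, and Theorem \ref{asymptotics} plus the continuous mapping theorem does not by itself yield $M_t/t^2\Rightarrow A_1+A'_1$; moreover the needed tail control is certainly not ``subsumed in the construction of the topology'', which is purely local. (The length of the component through the marked point \emph{is} continuous at open sets containing that point, so the $H$ part of your argument is sound modulo localization.) The paper is silent on this issue, but the repair is available from its own ingredients: \eqref{jeulinfixedtime} is convergence in total variation on fixed windows, so applying it to the bounded functionals $\int_0^n\indi{X_s<1}\,ds$ and its time-reversed counterpart gives convergence in law of the windowed mass directly, with no continuity of $\lambda$ required; what then remains is precisely the truncation estimate you defer, namely that the mass of $\paren{\hat{F}^2_{rt}-S}/t^2$ outside $(-A,A)$ is negligible uniformly in small $t$, which can be obtained for instance from the bound $M_t\leq L_t$ (with $L_t$ the length of the smallest closed interval containing $\hat{F}^2_t$, as in the proof of Theorem \ref{lil}) and the convergence of the rescaled last-passage times given by Jeulin's theorem recalled in Subsection \ref{jeulin}. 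With that estimate supplied, your argument is complete and matches the paper's.
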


The laws encountered in the corollary belong to the two infinitely divisible families studied in \cite{bpyzeta} and \cite{hyperbolic}. An anonymous referee remarks that Corollary \ref{corasymptotic} suggests an iterated logarithm law. Following this suggestion, we find:
\begin{teo}
\label{lil}
Almost surely,\begin{esn}
\liminf_{t\to 0+}\frac{\log\log t}{2t^2}M_t=1=\liminf_{t\to 0+}\frac{\log\log t}{2t^2}H_t.
\end{esn}
\end{teo}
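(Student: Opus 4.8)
The plan is to deduce Theorem~\ref{lil} from a Chung-type law of the iterated logarithm for two self-similar processes of index $2$ built out of Bessel functionals, the Borel--Cantelli lemmas being fed with small-deviation estimates read off from the Laplace transforms of Corollary~\ref{corasymptotic}. (Throughout, $\log\log(1/t)$ is what $\log\log t$ means as $t\to0+$.) The first step is to recall, as in the proof of Corollary~\ref{corasymptotic} and via Williams' decomposition of $\e$ at its maximum (already among the tools of Section~\ref{extinction}), that conditionally on $\paren{M,S}$ the processes $u\mapsto M-\e_{S+u}$ and $u\mapsto M-\e_{S-u}$ are independent three-dimensional Bessel processes from $0$, each run up to its first hitting time of $M$ and then conditioned to have that hitting time equal $1-S$, resp.\ $S$. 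Writing, for a $\mathrm{BES}(3)$ process $R$ from $0$, $T^R_t=\inf\set{u:R_u=t}$ and $L^R_t=\int_0^\infty\indi{R_u<t}\,du$, one gets for $t$ below a strictly positive random level the identities $H_t=T^R_t+T^{R'}_t$ and $M_t=L^R_t+L^{R'}_t$ with $R,R'$ independent; since $\mathrm{BES}(3)$ is transient, the above conditioning restricted to an initial segment $[0,\eta]$ is absolutely continuous with respect to the unconditioned law with density tending to $1$ as $\eta\downarrow0$, so for $t\to0+$ one may use unconditioned $R,R'$. I would record two structural facts: $\paren{T^R_t}_{t\geq0}$ is an increasing, index-$2$ self-similar process with independent increments (the strong Markov property of $R$ at the $T^R_t$), and $L^R_t=\int_0^t\ell^u_\infty\,du$ is a continuous additive functional of $u\mapsto\ell^u_\infty$, the total local time of $R$ at level $u$, which by a Ray--Knight theorem is a $\mathrm{BESQ}(2)$ from $0$.

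In particular, for small $t$ the law of $H_t/t^2$ is, up to a negligible conditioning error, that of $\mathcal H:=T^R_1+T^{R'}_1$, and $M_t/t^2$ that of $\mathcal M:=L^R_1+L^{R'}_1$; by Corollary~\ref{corasymptotic}, $\esp{e^{-q\mathcal H}}=\paren{\sqrt{2q}/\imf{\sinh}{\sqrt{2q}}}^2\sim 8q\,e^{-2\sqrt{2q}}$ and $\esp{e^{-q\mathcal M}}=\paren{1/\imf{\cosh}{\sqrt{2q}}}^2\sim 4\,e^{-2\sqrt{2q}}$ as $q\to\infty$, so $-\log\esp{e^{-q\,\cdot}}\sim 2\sqrt{2q}$ in both cases. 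De Bruijn's exponential Tauberian theorem then gives $-\log\proba{\mathcal H\leq\eps}\sim 2/\eps$ and $-\log\proba{\mathcal M\leq\eps}\sim2/\eps$ as $\eps\to0+$, so by self-similarity $\proba{H_t\leq\eps t^2}=\proba{\mathcal H\leq\eps}\paren{1+o\paren{1}}$ and similarly for $M$ as $t\to0+$; the same reasoning applied to the increments $H_t-H_s$, $M_t-M_s$ (whose Laplace transforms are again of the form $\exp\paren{-c\sqrt q+o\paren{\sqrt q}}$) yields the analogous estimates for increments. Since $M_t\geq H_t$, it is enough to prove $\liminf_{t\to0+}\frac{\log\log(1/t)}{2t^2}H_t\geq1$ and $\liminf_{t\to0+}\frac{\log\log(1/t)}{2t^2}M_t\leq1$.

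For the lower bound, fix $r\in\paren{0,1}$, $a<2$, $t_n=r^n$; then $\proba{H_{t_n}\leq a\,t_n^2/\log\log(1/t_n)}=n^{-\paren{2/a}\paren{1+o\paren{1}}}$, summable since $2/a>1$, so by Borel--Cantelli a.s.\ $H_{t_n}>a\,t_n^2/\log\log(1/t_n)$ eventually; monotonicity of $H$ and regular variation of $t\mapsto t^2/\log\log(1/t)$ give $\frac{\log\log(1/t)}{2t^2}H_t\geq\frac a2 r^2\paren{1+o\paren{1}}$ on each $[t_{n+1},t_n]$, and letting $a\uparrow2$, $r\uparrow1$ yields $\liminf\geq1$ for $H$, hence for $M$. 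For the upper bound, fix $a>2$, $t_n=r^n$, $c_n=a\,t_n^2/\log\log(1/t_n)$; then $\sum_n\proba{M_{t_n}\leq c_n}=\sum_n n^{-\paren{2/a}\paren{1+o\paren{1}}}=\infty$, and one wants $\set{M_{t_n}\leq c_n}$ infinitely often. These events are dependent, but the additive-functional structure of $M$ decouples them up to a vanishing error: for a slowly increasing gap $m=m\paren{n}$ the tail $M_{t_{n+m}}$ is of order $r^{2m}t_n^2=o\paren{c_n}$ with probability tending to $1$ (a Chebyshev bound, the exceptional events being summable), so on a complementary event $\set{M_{t_n}\leq c_n}$ coincides with $\set{M_{t_n}-M_{t_{n+m}}\leq\paren{1+o\paren{1}}c_n}$, and along indices spaced by at least $m$ these increments are independent enough for a Kochen--Stone (second moment) version of Borel--Cantelli to force the events infinitely often. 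Hence $\liminf_{t\to0+}\frac{\log\log(1/t)}{2t^2}M_t\leq\liminf_n\frac{\log\log(1/t_n)}{2t_n^2}M_{t_n}\leq a/2$, and $a\downarrow2$, together with the lower bound and $M_t\geq H_t$, gives that both limits inferior equal $1$.

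The hard part will be the decoupling in the upper bound: for the geometric scale $t_n=r^n$ the genuine tail $M_{t_{n+1}}$ has the \emph{same} order $t_n^2$ as $c_n\log\log(1/t_n)$, so it cannot simply be dropped; one must insert a growing gap $m\paren{n}$ and control, uniformly in $n$, the joint small-ball behaviour of the decoupled increments — the standard but delicate core of any Chung-type iterated-logarithm law, made a touch heavier here since it is needed for $M$, an additive functional of a $\mathrm{BESQ}(2)$, rather than for the process $H$ with independent increments. The secondary technical points are checking that the bridge-type conditioning of $\e$ is asymptotically negligible as $t\to0+$ and that the Tauberian estimates for the increment laws are uniform in the relevant range of $t$.
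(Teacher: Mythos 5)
Your lower-bound half is essentially correct and runs parallel to the paper's argument. Conditionally on the maximum, the two sides of the excursion are independent three-dimensional Bessel processes and $H_t$ is the sum of their hitting times of level $t$; the paper obtains the small-deviation estimate for $\imf{T_1}{X}+\imf{T_1}{Y}$ from the Laplace transform by a Chernoff bound and concludes with Watanabe's integral-test lemma, which is the same content as your de Bruijn Tauberian estimate followed by Borel--Cantelli along $t_n=r^n$ and monotonicity. The transfer from the bridge-type conditioning to the unconditioned product law is also treated lightly in the paper (an almost sure event remains almost sure under conditioning for almost every value of the conditioning variable, together with scaling), so your local absolute-continuity remark is acceptable in spirit, although you assert more (density tending to one) than you need or prove.

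The genuine gap is the upper bound for $M_t$. You represent $M_t$ as the sum of the occupation times of the two Bessel processes below level $t$; increments of this over disjoint level ranges are not independent (by Ray--Knight they are functionals of a squared-Bessel local-time profile, hence only Markov-dependent in the level variable), and you explicitly defer the resulting decoupling/Kochen--Stone argument as ``the hard part''. As sketched it is also miscalibrated: to make the tail $M_{t_{n+m}}$ negligible compared with $c_n\asymp t_n^2/\log n$ with summable exceptional probabilities, Markov's inequality forces a gap $\imf{m}{n}$ of order $\log n$, which in turn forces passing to a sparse subsequence and supplying uniform joint small-ball estimates that are never given. The paper avoids all of this with a sandwich you miss: $H_t\leq M_t\leq L_t$, where $L_t$ is the length of the smallest closed interval containing $\hat{F}^2_t$, that is, the sum of the last-passage times at level $t$ of the two Bessel processes. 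Under the unconditioned product law this last-passage process is a genuine $1/2$-stable subordinator with Laplace exponent $\lambda\mapsto 2\sqrt{2\lambda}$, and the classical Chung-type iterated-logarithm law for subordinators (quoted in the paper from \cite{bertoinpl}) gives $\liminf_{t\to 0+}L_t\log\abs{\log t}/2t^2=1$ almost surely, whence the upper bound for both $M$ and $H$ with no decoupling whatsoever. Note finally that your correct observation that $H$ has independent (non-stationary) increments cannot rescue the upper bound, since $H_t\leq M_t$ points the wrong way; replacing your Kochen--Stone programme by the domination $M_t\leq L_t$ is the missing idea.
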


The organization of the paper is as follows: in Section \ref{representationtaggedfragment} we prove Proposition \ref{conditionedsubordinator}, in Section \ref{fallingapart} we prove Theorem \ref{teosaltos} and Proposition \ref{localTimeBesselBridge}, finally proving Theorems \ref{asymptotics}, \ref{lil} and Corollary \ref{corasymptotic} in Section \ref{extinction}.

\section{The representation of the tagged fragment}
\label{representationtaggedfragment}

In this section, we shall prove Proposition \ref{conditionedsubordinator}. This will be done, in the Brownian case in subsection \ref{brownianrepresentation}, by means of Bismut's decomposition of It\^o's measure, while it will rely on considerations involving positive self-similar Markov processes, like the mass of a tagged fragment of a self-similar fragmentation, in the general case of the fragmentations $F^\alpha$ in subsection \ref{stablerepresentation}.

To complete the interpretation of $-\sigma^h$ given in Section \ref{intro}, let us see that the opposite of a stable subordinator of index $\beta$ starting at one and conditioned to die at zero actually dies at zero. With the notation already introduced, this is expressed as: $\hat{\p}^h_x$-almost surely, $-\sigma^h_{\zeta^h-}=0$. To this end, let us determine the conditional law of $-\sigma^h_t,t< \zeta$ given $\zeta=a$: since\begin{align*}
&\imf{\hat{\p}^h_x}{\zeta^h\in da}=\frac{\imf{f_a}{x}}{\imf{h}{x}}da
\intertext{the Markov property implies that for decreasing $x_i$ and increasing $t_i$:}
&\imf{\hat{\p}^h_x}{-\sigma^h_{t_1}\in dx_1,\ldots,-\sigma^h_{t_n}\in dx_n, \zeta^h\in da}/\,dx_1\cdots dx_n\,da
\\&=\frac{\imf{f_{t_1}}{x_1-x}\imf{f_{t_2}}{x_2-x_1}\cdots \imf{f_{t_n}}{x_n-x_{n-1}}\imf{f_{a-t_n}}{x_n}}{\imf{h}{x}}
\end{align*}so that a version of the conditional law of $-\sigma^h_t,t< \zeta$ given $\zeta=a$ under $\hat{\p}_x^h$ is that of a bridge of $-\sigma$ between $x$ and $0$ of length $a$. Thanks to Proposition 1 in \cite{markovbridges}, we know that the left-hand limit at $a$ of such a bridge is equal to $0$ almost surely, and this implies that $-\sigma^h_{\zeta-}=0$ $\p_x$-almost surely.

Using the self-similarity of $\sigma$, it follows that $-\sigma^h$ is a positive self-similar Markov process; this fact will be crucial to establishing Proposition \ref{conditionedsubordinator} for all $\alpha\in (1,2]$. However, in the Brownian case, we only need to calculate the finite-dimensional distributions of the tagged fragment and compare them to those of $-\sigma^h$, as we will do in subsection \ref{brownianrepresentation} armed with Bismut's representation of the It\^o measure (found in \cite{revuzyor}) followed by a conditioning by the length. 
\subsection{An analysis under It\^o's measure}
\label{brownianrepresentation}
We shall work under It\^o's measure of positive excursions of Brownian motion denoted $n_+$. It can be described in terms of the law of the normalized Brownian excursion, which is the content of It\^o's description of the It\^o measure, as follows. Let $\paren{E,\mc{E}}$ denote excursion space consisting of continuous functions $\fun{e}{[0,\infty)}{[0,\infty)}$ for which there exists $L=\imf{L}{e}\geq 0$, called the length of the excursion, such that $\imf{e}{t}\neq 0$ iff $0<t<L$, together with the $\sigma$-field generated by the canonical process $X$. This $\sigma$-field is also the one generated by the topology of $E$ when we use a metric for uniform convergence on compact sets. It\^o's description of $n_+$ is that the law of $L$ under $n_+$ admits a density given by $v\mapsto 1/2\sqrt{2\pi v^3}\indi{v\geq 0}$ and that the conditional law of $\paren{e_t}_{t\leq L}$ given $L=v$ is that of a Brownian excursion of length $v$, denoted $\pi^v$, so that it has the law of $\paren{\sqrt{v}X_{s/v}}_{t\in [0,v]}$ under $\pi$ (simplified notation for $\pi^1$). This means that for every bounded and measurable functional $\Phi$ on $E$ and  measurable $\fun{g}{[0,\infty)}{[0,\infty)}$, the following equality holds:
\begin{esn}
\label{itosdescription}
\imf{n_+}{\imf{g}{L}\Phi}=\int_0^\infty\frac{dv}{2\sqrt{2\pi v^3}} \imf{g}{v}\imf{\pi^v}{\Phi}.
\end{esn}

However, since the notion of  tagged fragment involves an independent uniform random variable, we are forced to introduce the measure $\tilde{n}_+$ over $\tilde{E}=[0,\infty)\times E$ given by\begin{esn} 
\imf{\tilde{n}_+}{dt,de}=\frac{1}{L}\indi{t\in (0,L)}\, dt\,\imf{n_+}{de}.
\end{esn}If we define the functions $X$, $U$ and $\zeta$ on $\tilde{E}$ by $\imf{X}{t,e}=e$, $\imf{U}{t,e}=t$ and $\imf{\zeta}{t,e}=e_t$, $X$ will take the place of the excursion $\e$, $U$ will take the place of our independent uniform random variable and $\zeta$ will be the death time of the tagged fragment once we extend the definitions of $F$ and $\chi$ over to $\tilde{E}$ by  taking into account the length $L$ in their definitions and use the process $X$ instead of $\e$ as follows: $F_t=\set{s\in (0,L):X_s>t}$ and $\chi_t$ is the length of the connected component of $F_t$ that contains $U$. As a final preliminary before commencing the proof of Proposition\ref{conditionedsubordinator} in the Brownian case, let us recall Bismut's description of the It\^o measure (cf. \cite[XII.4.7, p.502]{revuzyor}): under $L\cdot\tilde{n}_+$, the law of $\zeta$ is Lebesgue measure on $[0,\infty)$  and conditionally on $\zeta=a$,\begin{esn}
\paren{X_{s\wedge U}}_{s\geq 0}\quad\text{ and }\quad \paren{X_{\paren{L-s}^+\wedge\paren{L-U}}}_{s\geq 0}
\end{esn}are two independent Bessel processes of dimension $3$ processes stopped at their last visit to $a$. Therefore, under $L\cdot\tilde{n}_+$ and conditionally on $\zeta=a$, the tagged fragment  behaves like the process obtained by subtracting the last visit process of the concatenation of two independent Bessel  processes of dimension three on $[0,a]$ its final value; by one of William's time reversal theorems (cf. \cite[VII.4.6, p.317]{revuzyor}), it behaves like the process obtained by subtracting $1/2$-stable subordinator with Laplace exponent $q\mapsto 2\sqrt{2q}$ on the time interval $[0,a]$ its final value. If $T$ is such a subordinator, conditionally on $\zeta=a$, $\chi$ would be equal in law to $\paren{T_a-T_t}_{t\in [0,a]}$ and $L$ would be just $T_a$.
\begin{proof}[Brownian proof of Proposition \ref{conditionedsubordinator}]
Let $T$ be as above and  $f_t$ denote the density of $T_t$, given by\begin{esn}
\imf{f_t}{x}=\frac{\sqrt{2}t}{\sqrt{\pi x^3}}e^{-2t^2/x}.
\end{esn}The considerations of the preceding paragraph allow us to write
\begin{align*}
&\imf{\tilde{n}_+}{\zeta\in da,\chi_{t_1}\in dx_1,\ldots,\chi_{t_n}\in dx_n,L\in dv}/\, da\, dx_1\cdots dx_n\,dv
\\&=\frac{1}{v}\, \imf{f_{t_1}}{v-x_1}\imf{f_{\paren{t_2-t_1}}}{x_1-x_2}\cdots
\imf{f_{\paren{t_n-t_{n-1}}}}{x_{n-1}-x_n}\imf{f_{\paren{a-t_n}}}{x_n}
\end{align*}for decreasing $x_1,\ldots,x_n$ in $[0,v]$ (the tagged fragment decreases in size) and increasing $t_1,\ldots,t_n$ in $[0,a]$; note that the factor $1/v$ comes from the fact that we are not working with $L\cdot \tilde{n}_+$ (as in Bismut's description of $n_+$) but with $\tilde{n}_+$. 

Integrating $a$ out of the right hand side of the last display over the interval $(t_n,\infty)$ gives\begin{align*}
&\imf{\tilde{n}_+}{\chi_{t_1}\in dx_1,\ldots,\chi_{t_n}\in dx_n,L\in dv}/\, dx_1\cdots dx_n\,dv
\\&=\frac{1}{v}\, \imf{f_{t_1}}{v-x_1}\imf{f_{\paren{t_2-t_1}}}{x_1-x_2}\cdots
\imf{f_{\paren{t_n-t_{n-1}}}}{x_{n-1}-x_n}\frac{1}{2\sqrt{2\pi x_n}}.
\end{align*}Conditioning by length, using $\imf{\tilde{n}_+}{\zeta\in dv}=1/2\sqrt{2\pi v^3}$, allows the following conclusion:\begin{align*}
&\imf{\pi^v}{\chi_{t_1}\in dx_1,\ldots,\chi_{t_n}\in dx_n}/\, dx_1\cdots dx_n
\\&=\imf{f_{t_1}}{v-x_1}\imf{f_{\paren{t_2-t_1}}}{x_1-x_2}\cdots
\imf{f_{\paren{t_n-t_{n-1}}}}{x_{n-1}-x_n}\frac{\sqrt{v}}{\sqrt{x_n}}.
\end{align*}The right-hand side of the preceding display portrays the density of the finite-dimensional distributions of the opposite of a $1/2$-stable subordinator with Laplace exponent $q\mapsto 2\sqrt{2q}$ conditioned to die at zero started at $v$.
\end{proof}
\subsection{An analysis through positive self-similar Markov processes}
\label{stablerepresentation}
The definition of the fragmentation $F^\alpha$ is not as simple when $\alpha\in (1,2)$ as in the $\alpha=2$  case previously introduced (recall that $-1/\alpha$ stands for the index of the self-similar fragmentation) because its construction depends on the so-called height process. However, for our needs, concentrating on a tagged fragment of $F^\alpha$ will be enough. 

The tagged fragment associated to $F^\alpha$, denoted by $\chi^\alpha$, is a self-similar Markov process that is absorbed continuously at zero in finite time $\zeta^\alpha$. Thanks to the Lamperti transformation\footnote{For information and further references regarding self-similar Markov process, the Lamperti transformation and its relationship to exponential functionals of L\'evy processes, see the recent survey \cite{surveyexponentialfunctionals}.} it is associated to a subordinator $\xi^\alpha$ whose L\'evy measure has been explicitly calculated, in \cite{ssfrag} and \cite{heights}, and is given by\begin{equation}
\label{medlevybertoin}
x\mapsto\sqrt{\frac{2}{\pi}}\frac{e^x}{\paren{e^x-1}^{3/2}}
\end{equation}for the Brownian case (a multiple of (11) in \cite{ssfrag}, as explained there) and, recalling that $\beta=1-1/\alpha$,\begin{equation}
\label{medlevymiermont}
x\mapsto\frac{\beta}{\imf{\Gamma}{2-\beta}}\frac{e^x}{\paren{e^x-1}^{1+\beta}}
\end{equation}for $\alpha\in (1,2)$ (the display after (12) in \cite{heights}). The difference in the constant appearing is due to the fact that one uses the normalized Brownian excursion and not a scaled one corresponding to the height process of a Brownian excursion in Bertoin's construction of the fragmentation. We will now proceed with the proof of Proposition \ref{conditionedsubordinator}.
\begin{proof}[Proof of Proposition \ref{conditionedsubordinator}]
The Lamperti transformation takes a L\'evy process $\xi$ and a real number $a$ into the self-similar Markov process that starts at one given implicitly by\begin{esn}
T_{\int_0^t\imf{\exp}{a \xi_s}\, ds}=e^{\xi_t}.
\end{esn}The index of self-similarity of $T$, as defined in \cite{lamp72}, is then $1/a$. When applied to a subordinator $\xi$ and successively with $a$ and $-a$ for a positive $a$, it gives rise to two different processes, denoted by $T$ and $\hat{T}$ respectively, which  are nevertheless related to each other by duality of their resolvent operators with respect to Lebesgue measure on $(0,\infty)$, as shown in \cite{entrancelaws}. When $T$ is a $\beta$-stable subordinator with Laplace exponent $q\mapsto Cq^\beta$, the associated subordinator $\xi$ has L\'evy measure\begin{esn}
x\mapsto \frac{\beta C}{\imf{\Gamma}{1-\beta}}\frac{e^x}{\paren{e^{x}-1}^{1+\beta}}
\end{esn}which coincides with \eqref{medlevybertoin} when $\beta=1/2$ and $C=2\sqrt{2}$ and with \eqref{medlevymiermont} when $\beta\in(1/2,1)$ and $C=\imf{\Gamma}{1-\beta}/\imf{\Gamma}{2-\beta}$; the Lamperti transformation should be applied to $\xi$ with $a=\beta$ to obtain $T$. It follows that the tagged fragment of $F^\alpha$ (we had denoted it by $\chi^\alpha$) is in resolvent duality with a $\beta$-stable subordinator. Since $T_t$ admits a  density  $f_t$ then, as argued in \cite{entrancelaws}, when we view $\chi^\alpha$ time-reversed from its death time, it behaves as $T$ started at zero and conditioned to die at one via Doob's transformation with the excessive function $\imf{h}{x}=\int_0^\infty\imf{f_t}{1-x}\, dt$. Nagasawa's theorem on time-reversal then allows us to conclude that $F^\alpha$ has the same law as $-T$ started at one and conditioned to die at zero via $\imf{\hat{h}}{x}=\int_0^\infty\imf{f_t}{x}\, dt$.
\end{proof}
The  computations of the L\'evy measure of the subordinator $\xi$  associated to the death time of the tagged fragment of $F^\alpha$ performed in \cite{ssfrag} and \cite{heights} were based on the fact that one can express the density of the death time of the tagged fragment in terms the density  of a $\beta$-stable subordinator. With the notation introduced in the preceding proof, the density of the death time of the tagged fragment is $t\mapsto \imf{f_t}{x}/\imf{h}{x}$, where the constant $C$ chosen in terms of $\beta$ as mentioned during the course of the proof. This last expression should suffice to convince oneself of the validity of Proposition\ref{conditionedsubordinator} because of the following result:
\begin{lem}
\label{determinedbydeathtime}
The distribution of a decreasing and positive self-similar Markov process that is absorbed at zero in finite time is determined by its index and the law of its absorption time.
\end{lem}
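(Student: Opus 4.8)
The plan is to reduce the assertion, via the Lamperti transformation, to the statement that a subordinator is determined by the law of its exponential functional; the bridge between the two is exactly the moment formula for exponential functionals of subordinators on which the computations recalled in this section already rely.

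Let $X$ be a decreasing positive self-similar Markov process of index $\kappa$ that is absorbed at $0$ in finite time; by self-similarity it is enough to identify the law of $X$ started at $1$. Put $a=1/\kappa>0$. Applying the Lamperti transformation, $X$ started at $1$ is represented as $X_t=\exp(-\eta_{\tau(t)})$ for a (possibly killed) subordinator $\eta$, and the map $\eta\mapsto X$ is a bijection once $\kappa$ is held fixed. Since $X$ reaches $0$ in finite time, $\eta$ is non-trivial, so its Laplace exponent $\phi$, normalized by $\se\!\left[e^{-\lambda\eta_t}\right]=e^{-t\phi(\lambda)}$, is finite and strictly positive on $(0,\infty)$; moreover the absorption time of $X$ is the exponential functional
\[
\zeta=\int_0^\infty e^{-a\eta_s}\,ds=\int_0^\infty e^{-\tilde\eta_s}\,ds,\qquad\text{where }\tilde\eta:=a\eta,
\]
a subordinator with Laplace exponent $\tilde\phi(\lambda)=\phi(a\lambda)$. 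As $a$ is determined by $\kappa$, knowing $\tilde\phi$ amounts to knowing $\phi$, hence to knowing the law of $X$; so it remains to prove that the law of $\zeta$ determines $\tilde\phi$.

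Here the moment formula of \cite[Formula (4), p. 194]{surveyexponentialfunctionals} enters: for every $n\ge1$,
\[
\se\!\left[\zeta^{\,n}\right]=\frac{n!}{\tilde\phi(1)\,\tilde\phi(2)\cdots\tilde\phi(n)},
\]
the right-hand side being finite because $\tilde\phi$ is finite and positive on $(0,\infty)$. The law of $\zeta$ determines all its moments, so one reads off $\tilde\phi(1)=1/\se[\zeta]$ and then, recursively, $\tilde\phi(n)=n\,\se[\zeta^{n-1}]/\se[\zeta^{n}]$; thus the law of $\zeta$ determines $\tilde\phi$ at every positive integer. The remaining step, and the only one I expect to need genuine care, is that a Bernstein function is determined by its restriction to $\na$: this holds because $\tilde\phi$ extends holomorphically to $\set{z:\operatorname{Re}z>0}$ and, from its L\'evy--Khintchine representation together with $\abs{1-e^{-zx}}\le\min(2,\abs{z}x)$, it grows there at most linearly, so that the difference of two Bernstein functions agreeing on $\na$ meets the hypotheses of Carlson's theorem and must vanish identically (the argument is unaffected by a possible killing term in $\tilde\phi$, which only adds an additive constant). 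One could instead invoke directly the known fact that the law of the exponential functional of a subordinator determines the subordinator; the route through the moment formula keeps the argument self-contained and visibly parallel to the computations of \cite{ssfrag} and \cite{heights}. This identifies $\tilde\phi$, hence $\phi$, hence — through the Lamperti correspondence — the law of $X$, which is the claim.
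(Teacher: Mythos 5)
Your proof is correct and follows the paper's route almost exactly: reduce via the Lamperti transformation to the exponential-functional representation of the absorption time, apply the moment formula $\esp{\zeta^n}=n!/\tilde\phi(1)\cdots\tilde\phi(n)$ to recover $\tilde\phi$ on $\na$, and conclude that this pins down the subordinator and hence the process. The only point of divergence is the final uniqueness step. You invoke Carlson's theorem, checking that a Bernstein function extends holomorphically to the right half-plane with at most linear growth, so that two Bernstein functions agreeing on $\na$ coincide; this is valid (the growth estimate via $\abs{1-e^{-zx}}\le\min\paren{2,\abs{z}x}$ gives exponential type zero, well within Carlson's hypotheses, and killing only contributes a constant). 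The paper avoids complex analysis altogether: since $\se\bigl[\paren{e^{\delta\xi_t}}^i\bigr]=e^{-t\phi(-\delta i)}$ and $e^{\delta\xi_t}$ takes values in $[0,1]$, the recovered sequence gives all integer moments of a bounded random variable, whose law is therefore determined (Hausdorff moment determinacy), yielding the law of $\xi_t$ for each $t$ directly. The paper's step is more elementary and self-contained; yours trades that for a clean general statement (Bernstein functions are determined by their values on $\na$) at the cost of citing Carlson's theorem. Either way the conclusion and the overall architecture are the same, so there is no gap to repair.
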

\begin{proof}
By self-similarity, it suffices to consider the case when the given process starts at one. If $\zeta$ denotes the absorption time of a decreasing and positive self-similar Markov process starting at one which is absorbed at zero in finite time  obtained by applying the Lamperti transformation to a subordinator $\xi$, then there exists $\delta<0$ (one over the self-similarity index) such that $\zeta$ has the same law as the exponential functional\begin{esn}
A_\infty=\int_0^\infty\imf{\exp}{\delta\xi_s}\, ds.
\end{esn}On the other hand, if $\phi$ is the Laplace exponent of $\xi$, then formula (4) in \cite{surveyexponentialfunctionals} used with $q=0$ gives us\begin{esn}
\esp{\zeta^k}=\esp{A_{\infty}^k}=\frac{k!}{\prod_{i=1}^k \imf{\phi}{-\delta i}}.
\end{esn}It follows that the sequence of moments $\paren{\esp{\zeta^k}}_{k\in\na}$ determines the sequence $\paren{\imf{\phi}{-\delta i}}_{i\in\na}$. However, the second sequence determines the moments of the bounded random variable $\imf{\exp}{\delta \xi_t}$, so that it determines its law, hence that of $\xi_t$. Finally, it suffices to note that the distribution of $\xi_t$ and the self-similarity index determine the distribution of the self-similar Markov process we started with. 
\end{proof}
\section{The falling apart of the tagged fragment and fragmentation by ancestral line obliteration}
\label{fallingapart}
In this section, we shall prove Theorem \ref{teosaltos} and Proposition \ref{localTimeBesselBridge}. First,  the Brownian case of Theorem \ref{teosaltos} will be considered in subsection \ref{geometry}  using  a path transformation relating the normalized Brownian excursion and the Brownian bridge, and known results on the distribution of the ranked length of excursions of the Brownian bridge away from zero. Then, we shall see how these results tie up in the construction of another self-similar fragmentation from the normalized Brownian excursion. Finally, the proof for the general case will be shown to be the consequence of  our representation of the tagged fragment of $F^\alpha$ contained in Proposition \ref{conditionedsubordinator}, which allows us to calculate its conditional distribution given death-time and relate it to a stable subordinator, and known results on size-biased sampling of the jumps of subordinators.
\subsection{A visual argument for the Brownian case}
\label{geometry}
The Brownian interpretation of Theorem \ref{teosaltos} (that is, using the fragmentation $F^2$) is  quite visual and depends on a path transformation, introduced by Bertoin and Pitman,  between the normalized Brownian excursion and the reflected Brownian bridge which can be stated as follows (cf. \cite[Theorem 3.2]{pathtrans}): define $K^U=\paren{K^U_s}_{s\in [0,1]}$ by\begin{esn}
K^U_s=\begin{cases}
\min_{s\leq u\leq U}\e_u&\text{for $s\in [0,U]$}\\
\min_{U\leq u\leq s}\e_u&\text{for $s\in [U,1]$}.
\end{cases}
\end{esn}Then the process $b=\e-K^U$  is the absolute value of a Brownian bridge between $0$ and $0$ of length $1$. Let us note, however that the lengths of the excursions of $\e$ above $K^U$ are in one to one correspondence with the jumps of $\chi$. Since the excursions of $\e$ above $K^U$ are precisely the excursions of $b$ away from zero, we conclude that the decreasing sequence of the jumps of $\chi$ has the same law as the decreasing sequence of the lengths of excursions of a Brownian bridge away from zero. By Proposition 7 in \cite{poissondirichlet}, this is the Poisson-Dirichlet law with parameters $(1/2,1/2)$. This proves Theorem \ref{teosaltos} for $\alpha=2$.

The same type of analysis can be put to use in the construction of another fragmentation process. Define $b^0=\e$  and suppose that $\paren{U_i}_{i\geq 1}$ are independent (between themselves and $\e$) and uniformly distributed random variables.  For $n\geq 1$ construct $b^n$  as follows, and set $V_n=\set{s\in (0,1):b^n_s>0}$:  let  $\paren{a_{n-1},b_{n-1}}$ be the connected component of $V_{n-1}$ that contains $U_n$ , $\paren{K^n_s}_{s\in [0,1]}$ be given by\begin{esn}
K^{n}_s=
\begin{cases}
0&\text{if $s\not\in (a_{n-1},b_{n-1})$}\\
\min_{a_{n-1}\leq s\leq U_n} b^{n-1}_s&\text{if $a_{n-1}\leq s\leq U_n$}\\
\min_{U_n\leq s\leq b_{n-1}} b^{n-1}_s&\text{if $U_n\leq s\leq b_{n-1}$}\\
\end{cases},
\end{esn}and $b^n=b^{n-1}-K^n$.  To construct a self-similar interval fragmentation, let $N$ be a Poisson process independent of $\e$ and $\paren{U_i}_{i\geq 1}$, and  set $F^o_t=V_{N_t}$. This fragmentation, which has self-similarity index 1, erosion coefficient zero and dislocation measure equal to  $\imf{\pd}{1/2,1/2}$, shall be termed by ancestral line obliteration and we shall dwell next  on its interpretation and on a computation that can be performed with it.

We recall that the compact real tree $\paren{\tau_f,d_f,\rho_f}$ represents a genealogy coded by the function $f$ as mentioned in Section \ref{intro}. The random trees we shall be interested in are $\paren{\tau_{b^n},d_{b^n},\rho_{b^n}}$.  To continue the analogy presented in Section \ref{intro}, consider a the tree coded by a continuous function $f$ and let us note that the common ancestor of every element of $\tau_f$ is $\rho_f$, the most recent common ancestor of $s_1$ and $s_2$ is the equivalence class of any $r\in[s_1\wedge s_2,s_1\vee s_2]$ such that $\imf{m_f}{s_1,s_2}=r$ and the line of descent traced  from the ancestor $\rho_f$ up to the equivalence class of $s$ consists of equivalence classes of elements $r\in [0,1]$ such that $\imf{f}{r}=\imf{m_f}{r,s}$. To concatenate with our fragmentation $F^o$, let us note that $K^n_s=\imf{m_{b^{n-1}}}{U_n,s}$ and so $b^{n-1}-K^{n}$ represents the coding function for a tree that redefines the genealogy of $\tau_{b^{n-1}}$ by not taking into account the equivalence class of $U^n$ (in $b^{n-1}$) and all its ancestors up to the root. The interpretation of this transformation between continuous functions and their associated trees does not appear to be reported elsewhere. We refer to Figure \ref{obliteration} for a visual account of this procedure. 
\begin{figure}
\includegraphics{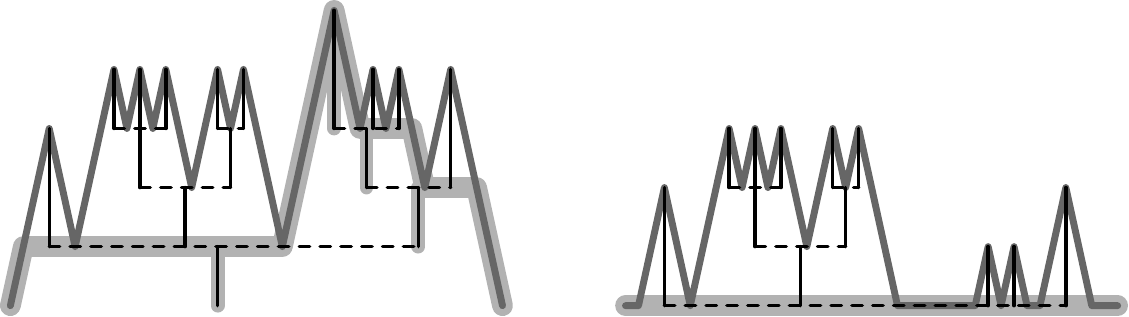}
\caption{Trees coded by continuous functions and obliteration of ancestral lines.}
\label{obliteration}
\end{figure}

To end this subsection, let us describe the law of the decreasing sequence of masses of the components  of $V_n$, which we shall denote $m_n$. To do this, let us note that $m_n$ is  obtained by taking a size-biased pick from $m_{n-1}$ (the size of the component of $V_{n-1}$ that contains $U_n$) and fragmentating it using a $\imf{\pd}{1/2,1/2}$-distribution. So, $m_{n}$ is obtained as the result of applying the fragmentation operator $\mathbf{Frag_{1/2}}$ of \cite{coagfrag} to $m_{n-1}$. Since $m_1$ has a $\imf{\pd}{1/2,1/2}$ distribution, Theorem 3.1 in the last reference implies that for $n\geq 1$,  $m_n$ has a $\imf{\pd}{1/2,n-1/2}$ distribution.
\subsection{A computational argument for the general case}
\label{computation}
The aim of this subsection is to establish Theorem \ref{teosaltos}. Our strategy  will be  to analyze the implications of Proposition \ref{conditionedsubordinator} by computing the conditional law of the tagged fragment given its death time, and its relationship to stable subordinators. Then we shall use this conditional law in conjunction with formulae describing  size-biased sampling of the jumps of subordinators to conclude. 

We shall use the framework and notation considered in the introduction to section \ref{representationtaggedfragment}. One conclusion of the introduction is that under $\hat{\p}^{h}_1$ and conditionally on $\zeta^h=a$, $-\sigma^h$ is a bridge of $-\sigma$ from $1$ to $0$ of length $a$, so the same result follows for the tagged fragment.  Also, the bridge of $-\sigma$ from $x$ to $y$ of length $v$ coincides with the opposite of that of $\sigma$ between $-x$ to $-y$ , so that the sizes of the jumps, in absolute value, are the same for both processes. We shall now prove Theorem \ref{teosaltos}.
\begin{proof}[Proof of Theorem \ref{teosaltos}]
Let us recall that the two-parameter Poisson-Dirichlet distribution (see the survey \cite{poissondirichlet} for further information and references), denoted by $\imf{\pd}{\beta,\theta}$ (we will think of $\beta$ as a fixed parameter) for  $\theta>-\beta$, is  a probability law on the space of decreasing sequences $v=v_1>v_2>\cdots>0$ such that $\sum_i v_i=1$, which is characterized by a property of their size-biased permutations:  $V=\paren{V_1,V_2,\ldots}$ has a $\imf{\pd}{\beta,\theta}$ distribution iff for a size biased permutation $\tilde{V}$ of $V$, the random variables defined implicitly by $\tilde{V}_1=Y_1$, $\tilde{V}_n=\paren{1-Y_1}\cdots\paren{1-Y_{n-1}}Y_n$ are independent and $Y_n$ has a Beta distribution with parameters $\paren{1-\beta,\theta+n\beta}$. A size biased permutation of $V$ is another sequence $\tilde{V}$ such that\begin{align*}
\probac{\tilde{V}_1=V_i}{V}&=V_i
\intertext{and}
\probac{\tilde{V}_{n+1}=V_i}{V,\tilde{V}_1,\ldots,\tilde{V}_n}&=\frac{V_i}{1-\tilde{V}_1-\cdots-\tilde{V}_n}\indi{V_i\neq \tilde{V}_j,1\leq j\leq n}.
\end{align*}One of the objectives of \cite{permanpitmanyor} is to construct the tools necessary for the analysis of size-biased permutations of the jumps of subordinators.  Let us start the process $-\sigma^h$ at level one, simplifying notation by stipulating that $\p_1=\p$, and consider  the decreasing sequence $V$ of the absolute values of the jump sizes of $-\sigma^h$ on $[0,\zeta]$, which will then sum up to one, and a size-biased permutation  $\tilde{V}$ of $V$ giving rise to the sequence $Y$ as before. We shall use the  the density of the L\'evy measure of $\sigma$, given by $\imf{\rho}{x}=\beta C/\imf{\Gamma}{1-\beta}x^{\beta+1}$ to define the function $\imf{\Theta}{x}=x\imf{\rho}{x}$. The discussion of the preceding paragraph and formula (2.d) of Theorem 2.1 in \cite{permanpitmanyor}, using the notation $\bar{x}=1-x$, allow the following:\begin{align*}
\proba{Y_1\in dx_1,\ldots,Y_n\in dx_n, \zeta\in da}
=v^n\imf{\Theta}{x_1}\imf{\Theta}{\bar{x}_1x_2}\cdots\imf{\Theta}{\bar{x}_1\cdots\bar{x}_{n-1}x_n}\imf{f_a}{\bar{x}_1\cdots\bar{x}_n}
\frac{1}{\imf{h}{1}}.
\end{align*}Now, we shall use the scaling identities of $f_t$ to integrate $a$ out of the last expression. Namely, since\begin{esn}
\imf{f_t}{y}=\frac{1}{y}\imf{f_{ty^{-\beta}}}{1},
\end{esn}we get\begin{align*}
&\proba{Y_1\in dx_1,\ldots,Y_n\in dx_n}
\\&=\frac{\beta^n}{\imf{\Gamma}{1-\beta}^n}\esp{\zeta^n}\imf{\Theta}{x_1}\imf{\Theta}{\bar{x}_1x_2}\cdots\imf{\Theta}{\bar{x}_1\cdots\bar{x}_{n-1}x_n}
\\&=\frac{\beta^n}{\imf{\Gamma}{1-\beta}^n}\esp{\zeta^n}C^n \paren{x_1\cdots x^n}^{\beta-1}\bar{x}_1^{2\beta-1}\cdots\bar{x}_n^{\paren{n+1}\beta-1}.
\end{align*}Now, let us note that the last expression in the preceding display does not depend on $C$, which can be seen either by direct analysis of the law of $\zeta$ considering the scaling identity of $f_t$, or by the fact that the left-hand side of the first equality in the preceding display represents a probability density. The conclusion is that $Y_1,\ldots,Y_n$ are independent and $Y_n$ has a Beta distribution with parameters $1-\beta$ and $\beta+n\beta$, so that the sequence of jumps of $\chi$ in decreasing order has the $\imf{\pd}{\beta,\beta}$ distribution.
\end{proof}

The Poisson-Dirichlet distribution of parameters $\paren{\beta,\beta}$ arises as the distribution of the ranked lengths of excursions of Bessel bridges of dimension $\delta=2\paren{1-\beta}$ starting and ending at zero (cf. \cite[Proposition 7]{poissondirichlet}). Since the inverse local time at zero of a Bessel process of dimension $\delta$ starting at zero is a stable subordinator of index $\beta$, it is natural to search for a similar representation for the inverse local time of our Bessel bridge; it turns out that inverse local time is a stable subordinator of index $\beta$ starting at zero and conditioned to die at $1$ (through a Doob transformation via the potential density as described in Section \ref{representationtaggedfragment}); this is the content of Proposition  \ref{localTimeBesselBridge} whose proof is as follows:
\begin{proof}[Proof of Proposition \ref{localTimeBesselBridge}]
Let $\p^\delta$  denote the law of a Bessel process of dimension $\delta$ starting at zero and $\p^\delta_{1}$ be the law of a Bessel bridge of dimension $\delta$ and length one starting and ending at zero. (For a general account of the theory of bridges of Markov processes, see \cite{markovbridges}.) Using the explicit representations of the transition densities of Bessel processes (in terms of modified Bessel functions of the first kind) one can prove that, for $s<1$, we have the following relationship between $\p^\delta_1$ and $\p^{\delta}$ (where $X$ stands for the canonical process and $\F_s=\sag{X_u:u \leq s}$):\begin{esn}
\p_1^{\delta}|_{\F_s}=\paren{\frac{1}{1-s}}^{\beta}e^{-\frac{X_s^2}{2\paren{1-s}}}\cdot \p^\delta|{\F_s}.
\end{esn}Let $\tau$ denote the inverse local time at zero, where the local time is taken in the sense of regenerative sets (semimartingale local time vanishes as explained in \cite[XI.1.5, p.442]{revuzyor}). Just as in \cite[VIII.1.3, p.326]{revuzyor}, we can extend the preceding equality to the stopping times $\tau_s$ on the set $\set{\tau_s<\infty}$ so that\begin{esn}
\p_1^{\delta}|_{\F_{\tau_s}}=\paren{\frac{1}{1-\tau_s}}^{\beta}\cdot \p^\delta|{\F_{\tau_s}}.
\end{esn}Since $\tau$ is a stable subordinator of index $\beta$ under $\p^\delta$ ($\tau_s$ has density $f_s$), it follows that under $\p^\delta_1$, $\tau$ is Markovian and its transition density from $x$ to $y$ in $s$ units of time is $\imf{f_s}{y-x}\paren{\paren{1-x}/\paren{1-y}}^\beta$, so that it  is a $\beta$-stable subordinator conditioned to die at $1$.
\end{proof}
\section{Asymptotics at extinction}
\label{extinction}
In this section we shall prove Theorem \ref{asymptotics}. The reader is asked to recall the framework introduced in the introduction in order to state it. A point that was not discussed there was the proper topology on the set of open subsets of $\re$ to be able to talk about weak convergence. To introduce it, consider first the following metric on $\mc{V}$ introduced in \cite{ssfrag}: for any $V\in\mc{V}$, let $\chi_V$ be the continuous function on $[0,1]$ given by $\imf{\chi_V}{x}=\imf{d}{x,[0,1]\setminus V}$, and for $V_1,V_2\in\mc{V}$, set $\imf{d_\mc{V}}{V_1,V_2}=\|\chi_{V_1}-\chi_{V_2}\|_{\infty}$. The distance between $V_1$ and $V_2$ is equal to the Hausdorff distance between $V_1^c$ and $V_2^c$ (where complementation is with respect to $[0,1]$) and it turns $\mc{V}$  into a separable compact metric space; we shall therefore speak of Hausdorff's topology on $\mc{V}$. For an open subset $V\subset\re$, let $\mc{V}^V$ be the set of open subsets of $V$.  Bertoin's metric on the $\mc{V}^{(0,1)}$ discussed in the introduction to this section  can be immediately extended to a metric $d_V$  for $\mc{V}^V$, when $V$ is a  bounded open set; it turns this space  into a compact and separable metric space, hence a Polish one. If $V$ is  an unbounded (open) set, we can define\begin{esn}
d_V=\sum_{n\in \z}\frac{d_{V\cap (n,n+1)}}{2^n}
\end{esn}so that $\mc{V}^V$ is again a Polish space. It is in this sense that we will consider random open subsets of $\re$; choosing a bounded metric giving the same topology of $\re$ in the definition of the Hausdorff distance would have the same effect. To discuss measurability issues, we shall use the following multiplicative system of functions of $\mc{V}$ generating its Borel \sa. The family of functions is $\mc{M}=\set{e^{-f}:f\in\mc{D}}$ where
\begin{align*}
\mc{D}=\set{\fun{f}{\mc{V}}{\re_+}:\text{there exists a positive measure $\mu\ll\lambda$ on $(0,1)$ such that $\imf{f}{V}=\imf{\mu}{V}$}},
\end{align*}and $\lambda$ is Lebesgue measure.
\begin{lem}
\label{multiplicative}
The classes $\mc{D}$ and $\mc{M}$  both generate $\mc{B}_{\mc{V}}$.
\end{lem}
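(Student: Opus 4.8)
The plan is to show the two statements separately, namely that $\mc{D}$ generates $\mc{B}_{\mc{V}}$ and that $\mc{M}$ does as well. For $\mc{D}$, I would first observe that every $f\in\mc{D}$ is continuous on $\mc{V}$ with respect to Hausdorff's topology: if $V_n\to V$ in $d_{\mc{V}}$, then $\lambda(V_n\triangle V)\to 0$ (because $\chi_{V_n}\to\chi_V$ uniformly forces convergence of the indicator functions in, say, $L^1(\lambda)$ after passing to subsequences, and a uniform-integrability/continuity argument pins down the full limit), and hence $\mu(V_n)\to\mu(V)$ for any $\mu\ll\lambda$ with bounded density, then for general $\mu\ll\lambda$ by monotone approximation. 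So $\mc{D}\subset C(\mc{V})$ and $\sigma(\mc{D})\subset\mc{B}_{\mc{V}}$. The reverse inclusion is the substantive point: I would argue that $\mc{D}$ separates points of $\mc{V}$. Indeed, if $V_1\neq V_2$ are distinct open subsets of $(0,1)$ then $\lambda(V_1\triangle V_2)>0$ (two distinct open sets of the line cannot differ by a Lebesgue-null set), so taking $\mu$ to be Lebesgue measure restricted to a suitable subinterval, or simply $\mu=\lambda$, gives $\mu(V_1)\neq\mu(V_2)$ after intersecting with an appropriate interval where the two sets genuinely disagree. Since $\mc{V}$ is compact metric and $\mc{D}$ is a point-separating family of continuous functions that is closed under the relevant operations, a standard functional-analytic argument (or directly: the map $V\mapsto(\mu_I(V))_{I}$ over a countable base of intervals $I$ is a continuous injection into a Polish space) shows that the Borel $\sigma$-field is generated by $\mc{D}$; equivalently, the countable subfamily $\{V\mapsto\lambda(V\cap I):I\text{ rational interval}\}$ already generates $\mc{B}_{\mc{V}}$.

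For $\mc{M}=\{e^{-f}:f\in\mc{D}\}$, the inclusion $\sigma(\mc{M})\subset\mc{B}_{\mc{V}}$ is immediate since $x\mapsto e^{-x}$ is continuous. For the reverse, note that $\mc{M}$ is a multiplicative class: if $e^{-f},e^{-g}\in\mc{M}$ with $f=\mu(\cdot)$ and $g=\nu(\cdot)$, then $e^{-f}e^{-g}=e^{-(f+g)}$ and $f+g$ corresponds to the measure $\mu+\nu\ll\lambda$, so $e^{-(f+g)}\in\mc{M}$. Hence $\mc{M}$ is a multiplicative system in the sense of the monotone class theorem for functions, and $\sigma(\mc{M})=\sigma(\text{the vector space generated by }\mc{M})$. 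It therefore suffices to recover each $f\in\mc{D}$ as a $\sigma(\mc{M})$-measurable function: for fixed $f$ and any $t>0$, $tf\in\mc{D}$ so $e^{-tf}\in\mc{M}$, and $f=\lim_{t\to 0+}(1-e^{-tf})/t$ pointwise, a limit of $\sigma(\mc{M})$-measurable functions, hence $f$ is $\sigma(\mc{M})$-measurable. Combined with the first part, $\mc{B}_{\mc{V}}=\sigma(\mc{D})\subset\sigma(\mc{M})$.

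The main obstacle I expect is the continuity claim $V_n\to V$ in $d_{\mc{V}}$ implies $\lambda(V_n\triangle V)\to 0$, or rather the slightly more delicate point that it suffices to pass from bounded-density measures to all of $\mc{D}$ and, in the separation step, to be sure that distinct open sets are distinguished by the specific measures allowed in $\mc{D}$ (all absolutely continuous, not merely Lebesgue measure itself). The resolution is that two distinct open subsets $V_1,V_2$ of $(0,1)$ always satisfy $\lambda(V_1\setminus V_2)>0$ or $\lambda(V_2\setminus V_1)>0$, because an open set equals the Lebesgue-measure-theoretic interior of its own support among open sets; concretely, if say $x\in V_1\setminus \overline{V_2}$ is impossible one still finds a small interval inside $V_1\setminus V_2$ using that $V_1\setminus\overline{V_2}$ is open and nonempty whenever $V_1\not\subset\overline{V_2}$, and the remaining case $V_1\subset\overline{V_2}$, $V_1\neq V_2$ is handled symmetrically. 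Once point-separation is secured, the rest is the routine invocation of the monotone class / multiplicative system theorem, and I would not belabor it.
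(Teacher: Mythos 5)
Your reduction of $\mc{M}$ to $\mc{D}$ and the easy inclusion are fine, but the key step of your argument fails: the class $\mc{D}$ does \emph{not} separate points of $\mc{V}$. Take $V_1=(0,1)$ and $V_2=(0,1)\setminus\set{1/2}$. These are distinct elements of $\mc{V}$, and in fact $\imf{d_{\mc{V}}}{V_1,V_2}=1/2$ because $\imf{\chi_{V_2}}{1/2}=0$ while $\imf{\chi_{V_1}}{1/2}=1/2$; yet $\imf{\mu}{V_1}=\imf{\mu}{V_2}$ for \emph{every} positive $\mu\ll\lambda$, since such a $\mu$ charges no singleton. The same happens whenever two open sets differ by a relatively closed Lebesgue-null set (remove a measure-zero Cantor set, say). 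So the assertion in your ``resolution'' paragraph --- that distinct open subsets of $(0,1)$ must differ on a set of positive Lebesgue measure --- is false, and with it the continuous-injection / multiplicative-class route collapses at its decisive step; no choice of absolutely continuous measures can repair it. Your preliminary continuity claim is also wrong as stated: for $V_n=(0,1)\setminus\set{k/n:1\leq k\leq n-1}$ one has $\chi_{V_n}\leq 1/(2n)$, so $V_n\to\emptyset$ in $d_{\mc{V}}$ while $\imf{\lambda}{V_n}=1$; thus $d_{\mc{V}}$-convergence does not give $\imf{\lambda}{V_n\triangle V}\to 0$, and $V\mapsto\imf{\mu}{V}$ is in general only lower semicontinuous. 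That particular defect is harmless --- lower semicontinuity still gives Borel measurability, hence $\sag{\mc{D}}\subset\mc{B}_{\mc{V}}$ --- but the failure of separation is fatal to your strategy for the reverse inclusion.

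For comparison, the paper argues the hard inclusion differently: it notes $\mc{B}_{\mc{V}}=\sag{V\mapsto\chi_V}$ and seeks to show that each evaluation $V\mapsto\imf{\chi_V}{t}=\imf{d}{t,V^c}$ is $\sag{\mc{D}}$-measurable, using the identity $\set{\imf{d}{t,V^c}\geq\eps}=\set{\imf{\mu}{V}=2\eps}$ with $\mu$ equal to Lebesgue measure restricted to $(t-\eps,t+\eps)$; point separation is never invoked. Be aware, though, that the obstruction you hit is intrinsic and not an artifact of your route: in that identity only the inclusion ``$\subset$'' is automatic, since $\imf{\mu}{V}=2\eps$ only says that $V$ has full Lebesgue measure in $(t-\eps,t+\eps)$ and does not force $(t-\eps,t+\eps)\subset V$ (again $V=(0,1)\setminus\set{1/2}$, $t=1/2$). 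More structurally, any set in $\sag{\mc{D}}$ contains either both or neither of two open sets on which all of $\mc{D}$ agrees, while singletons of the metric space $\mc{V}$ are Borel; so $\sag{\mc{D}}$ cannot distinguish open sets differing by a closed null set, and the stated equality of $\sigma$-fields can only be expected after identifying such open sets (or enlarging $\mc{D}$). In short: the gap in your proposal is a genuinely false separation claim, and it cannot be patched by a more careful choice of measures in $\mc{D}$.
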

We can use the preceding lemma to see that the $\mc{V}^{(0,1)}$-valued  variable $\hat{F}^2_t$ is measurable, and since, for any open subset  $V$  of $\re$, the inclusion from $\mc{V}^V$ into $\mc{V}^\re$ is continuous, it is also a random variable with values in $\mc{V}^\re$. In fact, we shall see first that if $T$ is a random variable with values in $[0,\infty)$, then\begin{esn}
F^2_T=\set{s\in (0,1):\e_s>T}
\end{esn}is a $\mc{V}$-valued random variable. (This implies $\hat{F}^2_t$ is also a random variable.) To do that, we note that thanks to display (2) in \cite{ssfrag}, $t\mapsto F^2_t$ is right-continuous on $[0,\infty)$, so that it suffices to prove that $F^2_t$ is a random variable for every deterministic $t\in [0,\infty)$. By Lemma \ref{multiplicative} it suffices to prove that for every measure $\mu$ on the Borel sets of  $(0,1)$, $\imf{\mu}{F^2_t}$ is measurable. Since\begin{esn}
\imf{\mu}{F^2_t}=\int_0^1\indi{\e_s>t}\,\imf{\mu}{ds}
\end{esn}and the trajectories of $\e$ are continuous, we obtain the measurability of $\imf{\mu}{F^2_t}$ and as a consequence, that of $F^2_t$.  A similar argument implies that $\set{t\in\re:Z_t<1}$ is a $\mc{V}^\re$-valued random variable. Let us note that, if $R_{U}$ denotes the restriction map $V\mapsto V\cap U$, then a sequence $\paren{\mu_n}_{n\in\na}$  of probability laws on $\mc{V}^\re$ converges in distribution to $\mu$ iff for every $i\in\z^+$, $\mu_n\circ R_{(-i,i)}^{-1}$ converges in distribution to $\mu\circ R_{(-i,i)}^{-1}$. This is a direct consequence of the fact that, with the distance $d_\re$ defined on $\mc{V}^\re$, a subset $A$ of $\mc{V}^\re$ is compact if and only if $\set{V\cap \paren{-i,i}:V\in A}$ is compact for every $i\in\z^+$.

Theorem \ref{asymptotics} will be proved by the  use of a path transformation to substitute the random time $M-t$ by $t$, and then we shall  vary the length of the excursion instead of the parameter $t$, since a result concerning the Brownian excursion of length $v$ when $v$ tends to $\infty$ can be readily applied. 

We shall now provide a path transformation of the normalized Brownian excursion that leaves its distribution invariant and which translates our problem into one involving initial times rather than  the time $M$ of extinction of $F^2$: if we let\begin{esn}
\e^S_t=
M-\e_{\paren{S+t}\mod 1}
\end{esn}then we have
\begin{pro}
\label{rootchange}
$\e^S$ has the same law as $\e$.
\end{pro}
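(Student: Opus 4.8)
The plan is to prove Proposition \ref{rootchange} by describing the law of $\e$ under the It\^o excursion measure together with a uniformly chosen point, applying William's path-decomposition of a Brownian excursion at its maximum, and checking that the re-rooting operation $\e\mapsto\e^S$ corresponds, after this decomposition, to swapping the roles of the two independent three-dimensional Bessel pieces that arise — together with a reversal of each piece, which leaves their joint law invariant. Concretely, I would first recall William's decomposition (as in \cite[VII.4.6 and XII.4.7]{revuzyor}): under $n_+$ conditioned on $\{M=m\}$, the excursion, split at the a.s.\ unique location $S$ of its maximum, is coded by two independent three-dimensional Bessel processes started at $0$ and run until their last passage through $m$, read left-to-right on $[0,S]$ and right-to-left on $[S,L]$. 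After normalization by the length $L$, via It\^o's description \eqref{itosdescription}, this gives an explicit description of the pair $(\e,S)$: the normalized excursion together with its argmax.

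Next I would describe the action of the re-rooting map on this decomposition. The transformation $t\mapsto M-\e_{(S+t)\bmod 1}$ flips the excursion upside down (so maxima become minima at height $0$, and the old root of $\e$, sitting at height $0$, becomes the new global maximum at height $M$) and cyclically shifts so that the old argmax $S$ becomes the new origin. Reading off the two Bessel-type pieces of $\e^S$ split at \emph{its} argmax — which is exactly the old location of $0$ and $1$, i.e.\ the endpoints of $\e$ — one sees that the piece of $\e^S$ on $[0,1-S]$ is the time-reversal of the right piece of $\e$ (the one on $[S,1]$), and the piece of $\e^S$ on $[1-S,1]$ is the time-reversal of the left piece of $\e$. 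Since a three-dimensional Bessel process started at $0$ and run to its last passage through a level $m$ is reversible (this is precisely the time-reversal theorem already invoked in subsection \ref{brownianrepresentation}), each reversed piece is again such a process, and the two pieces of $\e^S$ are independent with the same joint law as the two pieces of $\e$. Re-assembling via the same It\^o/normalization recipe, $(\e^S, 1-S)$ has the same law as $(\e,S)$, and in particular $\e^S\stackrel{d}{=}\e$.

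The bookkeeping I would be most careful about is the identification of the argmax of $\e^S$ and the matching of endpoints: under the cyclic shift the two ``ends'' of the original excursion at height $0$ get glued at the new maximum, so the new argmax of $\e^S$ is the point of $\e^S$ lying above the former root, i.e.\ the image of $\{0\}\cup\{1\}$; verifying that this point is a.s.\ unique and that the split there produces exactly the two reversed pieces (with the correct orientations and the correct common last-passage level $M$) is the heart of the argument. A secondary, purely technical, point is measurability of the re-rooting map on $\mc{V}$-type spaces and the passage from the $\sigma$-finite $n_+$ (or $L\cdot\tilde n_+$) to the probability $\pi=\pi^1$ by conditioning on $L=1$; this is routine given Lemma \ref{multiplicative} and It\^o's description, so I would dispatch it quickly. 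I expect the main obstacle to be phrasing the correspondence cleanly enough that the invariance follows without a coordinate-heavy computation — i.e.\ packaging William's decomposition so that ``re-rooting at the argmax'' is visibly the involution that exchanges the two Bessel legs after reversing them. An alternative, more computational route — writing finite-dimensional densities of $\e^S$ directly from those of $\e$ and checking they agree, in the spirit of the Brownian proof of Proposition \ref{conditionedsubordinator} — is available as a fallback, but I would prefer the path-decomposition argument since it is the one Haas' transformation is built to exploit and it parallels Biane's treatment \cite{bianevervaat} of the Vervaat transform.
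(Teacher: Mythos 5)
Your plan is essentially the paper's own proof: Williams' description of the It\^o measure $n_+$ conditioned on $\set{M=m}$, Williams' time-reversal theorem for the three-dimensional Bessel process, the observation that $\e\mapsto\e^S$ exchanges (up to vertical flip and reversal) the two Bessel legs, and then a conditioning on the length via It\^o's description to pass from $n_+$ to $\pi^1$. Two corrections to your bookkeeping, though. First, in Williams' decomposition at the maximum the two legs are independent BES(3) processes started at $0$ and stopped at their \emph{first hitting time} of $m$, not at their last passage through $m$: a BES(3) run to its last passage of $m$ typically exceeds $m$ beforehand (so it cannot be a piece of an excursion of height $m$), and its time reversal is a Brownian motion from $m$ killed at $0$, not again a Bessel piece — so the ``reversibility'' you invoke is false as stated. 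The correct ingredient, which the paper cites as \cite[VII.4.8]{revuzyor}, is $\paren{X_{T_m-t}}_{0\leq t\leq T_m}\stackrel{d}{=}\paren{m-X_t}_{0\leq t\leq T_m}$; with it, the piece of $\e^S$ on $[0,1-S]$, which pathwise is $m$ minus the forward-read right leg (a vertical flip, not literally a time reversal), is seen to have the law of a BES(3) run to its first hit of $m$, and symmetrically for the other leg. (The last-passage decomposition you are remembering is Bismut's, used in subsection \ref{brownianrepresentation} for the tagged fragment, which is a different statement.) Second, the passage from invariance under $n_+$ to invariance of $\pi^1$ deserves one explicit sentence: equality of $n_+$-integrals against arbitrary test functions $\imf{f}{L}$ only gives $\imf{\pi^v}{\imf{g}{X^S_{t_1},\ldots,X^S_{t_n}}}=\imf{\pi^v}{\imf{g}{X_{t_1},\ldots,X_{t_n}}}$ for Lebesgue-almost every $v$, and the paper upgrades this to all $v$ (in particular $v=1$) by the weak continuity of $v\mapsto\pi^v$; this is the ``routine'' step you defer, and it is where the conditioning is actually justified.
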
This path transformation of the normalized Brownian excursion was suggested  by B. Haas in a private communication and it is illustrated in Figure \ref{haastrans}.
\begin{figure}
\includegraphics{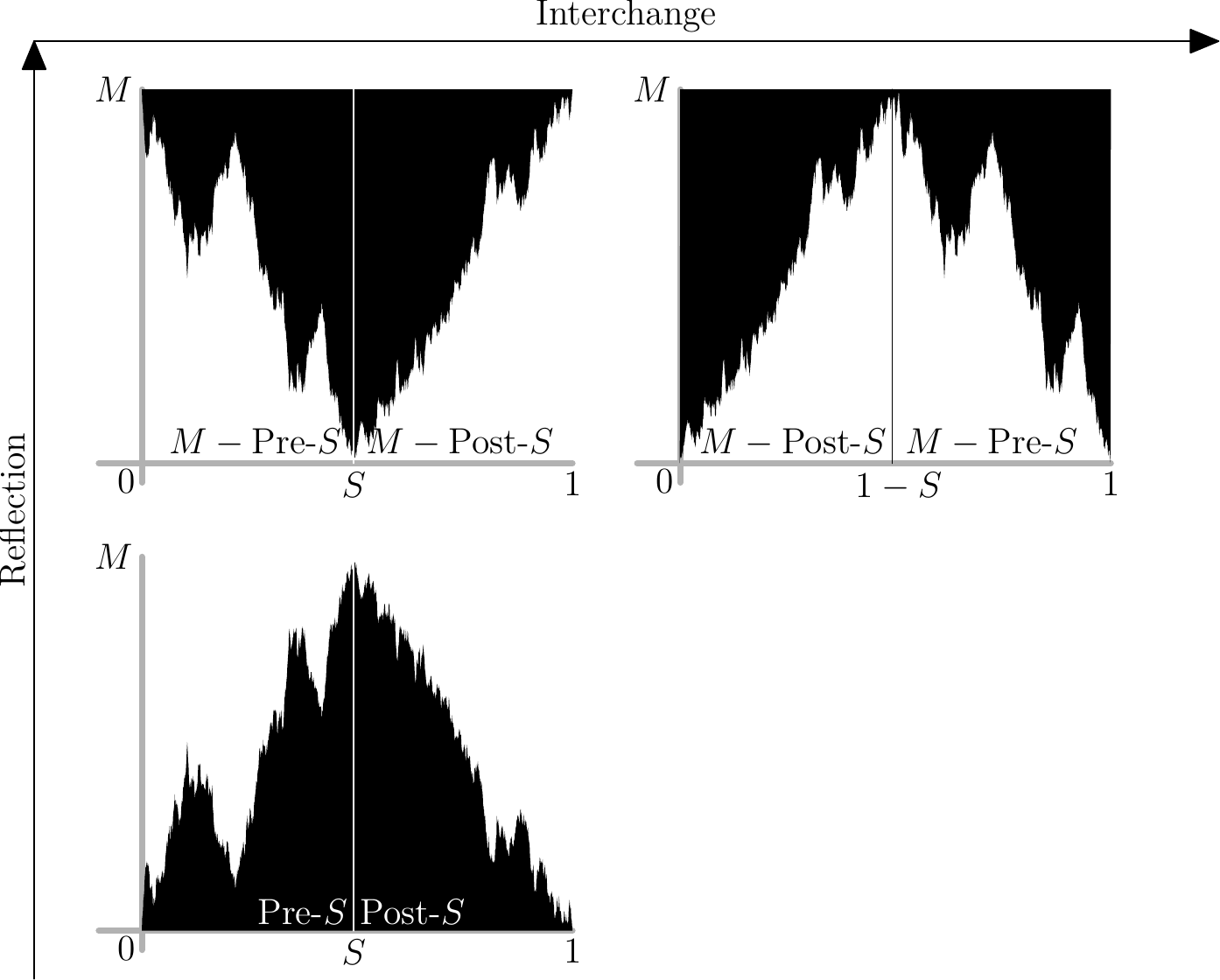}
\caption{Haas' path transformation of the normalized Brownian excursion.}
\label{haastrans}
\end{figure}

Note that $\paren{\hat{F}^2_t-S}/t^2$ can be obtained from  $\e^S$ below level $t$ and the random time $S$. The precise representation depends strongly  on the value of $S$. However, on the event $nt^2<S<1-nt^2$, which is the whole space for fixed $n$ as $t\to 0+$, we have the identity\begin{esn}
\imf{R_{\paren{-n,n}}}{\frac{1}{t^2}\paren{\hat{F}^2_t-S}}= -\set{s\geq 0:\e^S_{st^2}<t,s<n}\cup \set{s\geq 0:\e^S_{1-st^2}<t,s<n}.
\end{esn}Therefore, we shall prove Theorem \ref{asymptotics} by verifying that the right-hand side in the preceding display converges in law to the limit we have stipulated. Since, assuming Proposition \ref{rootchange}, $\paren{\e^S_{st^2}/t}_{s\in (0,1)}$ has the  law of a Brownian excursion of length $1/t^2$ (which was denoted $\pi^{1/t^2}$),  denoting by $X$ be the canonical process on excursion space, we have that\begin{align*}
&-\set{s\geq 0:\e^S_{st^2}<t,s<n}\cup \set{s\geq 0:\e^S_{1-st^2}<t,s<n}
\intertext{has the same law as}
&- \set{s\in (0,n):X_{s}<1}\cup \set{s\in (0,n):X_{1/t^2-s}<1}
\end{align*}under $\pi^{1/t^2}$, at least for $n<1/t^2$. Inspired by a result of Jeulin (cf.  \cite[Th. 6.41 p. 127]{jeulinsmg}) we will prove in subsection \ref{jeulin} that  if $F$ and $G$ are  bounded measurable functionals depending on $\paren{X_s}_{s\in (0,n)}$, and if we let $\tilde{X}_t=X_{v-t}$, then\begin{equation}
\label{ourversionjeulin}
\imf{\pi^v}{\imf{F}{X}\imf{G}{\tilde{X}}}\to_{v\to\infty}\imf{\p_0^3}{F}\imf{\p_0^3}{G}.
\end{equation}We have seen why $\fund{\psi^n_t}{f}{\set{s\in (0,n):\imf{f}{s}>t}}$ is measurable, and since $t\mapsto \psi^n_t$ is right-continuous, then $f\mapsto \paren{\set{s\in (0,s):\imf{f}{s}>t}}_{t\geq 0}$ is a measurable process. The asymptotic identity \ref{ourversionjeulin} and the preceding discussion imply Theorem \ref{asymptotics}, as long as we can be convinced of the validity of Proposition \ref{rootchange}.

Regarding Corollary \ref{corasymptotic}, it suffices to remark that the size of the connected component of the set $\set{s\in\re:Z_s<r}$ which contains zero is equal to the sum of the hitting times of level $r$ by two independent three-dimensional Bessel processes starting at zero, which share $q\mapsto \sqrt{2q}/\imf{\sinh}{\sqrt{2q}}$ as a Laplace transform when $r=1$; the process of hitting-times of a Bessel process is a self-similar increasing additive process. On the other hand, the Lebesgue measure of $\set{s\in\re:Z_s<1}$ is the sum of the occupation times of $(0,1)$ of these independent Bessel processes and, thanks to the Ciesielski-Taylor identity for example (which equals their law to that of the first exit from $(-1,1)$ by a Brownian motion starting at zero) they have a common Laplace transform given by $q\mapsto 1/\imf{\cosh}{\sqrt{2q}}$.

Before commencing the proof of Proposition \ref{rootchange} (in Subsection \ref{haas}) and  the asymptotic relationship \eqref{ourversionjeulin} (in Subsection \ref{jeulin}), let us turn to the proof of Lemma \ref{multiplicative}.
\begin{proof}[Proof of Lemma \ref{multiplicative}]
It suffices to see that $\sag{\mc{D}}=\mc{B}_{\mc{V}}$.

As a consequence of Lemma 2 in \cite{ssfrag}, we see that for every measure $\mu$ on $\mc{B}_{(0,1)}$ absolutely continuous with respect to Lebesgue measure, the function $V\mapsto \imf{\mu}{V}$ is continuous, hence $\mc{B}_{\mc{V}}$\nbd measurable, implying $\sag{\mc{D}}\subset\mc{B}_{\mc{V}}$. To verify the converse inclusion, we note that the definition of $d_\mc{V}$ implies $\mc{B}_{\mc{V}}=\sag{\chi}$ where $\chi$ is the function given by $V\mapsto \chi_V$. We will finish the proof by verifying that $\chi$ is $\sag{\mc{D}}$\nbd measurable. As the Borel subsets of the space of continuous functions on $[0,1]$ equipped with the uniform norm are generated by the projections $f\mapsto \imf{f}{t}$ for $t\in (0,1)$, the asserted measurability for $\chi$ will follow if we verify that for every $t\in (0,1)$, the function $V\mapsto \imf{\chi_V}{t}=\imf{d}{t,[0,1]\setminus V}$ is $\sag{\mc{D}}$\nbd measurable. However, for every  $t\in (0,1)$, $\imf{\chi_V}{t}\leq t\wedge(1-t)$ and for every $\eps\in (0,t\wedge (1-t))$  we can define the measure $\mu$ on $(0,1)$ as Lebesgue measure concentrated on $(t-\eps,t+\eps)$ for which the following holds:\begin{esn}
\set{\imf{d}{t,V^c}\geq \eps}=\set{\imf{\mu}{V}=2\eps}\in\sag{\mc{D}}.
\end{esn}
\end{proof}
\subsection{The transformation $\e\mapsto \e^S$}
\label{haas}
The aim of the following paragraphs is to show how Proposition \ref{rootchange} can be deduced from one of  William's time reversal results  relating Brownian motion killed when it reaches zero and the three-dimensional Bessel process on one hand and It\^o's and William's descriptions of the It\^o measure on the other. 

Let $n_+$ be the It\^o measure of positive excursions of Brownian motion  introduced in subsection \ref{brownianrepresentation}. We shall keep the notation.  
The reader is asked to recall It\^o's description of the It\^o measure since we shall perform a conditioning by the length on $n_+$. To carry out this program, we will also need William's description of the It\^o measure, which is the following. Let $\fun{M}{E}{\re_+}$ denote the height of the excursion, given by $M=\sup_{s\geq 0}X_s$. Then the image law of $M$ under $n_+$ admits the density $m\mapsto \indi{(0,\infty)}/2m^2$ and the conditional law of $X$ under $n_+$ given $M=m$ is that the pasting together of two independent three-dimensional Bessel processes started at zero and stopped when they reach level $m$, one of them concatenated in reverse time after the other. Now, let us recall the following time-reversal result, for which the reader is referred to \cite[VII.4.8]{revuzyor}: if $R$ is a three-dimensional Bessel process starting at zero, $b>0$ and $T_b$ is the hitting time of $b$ by $R$, then $\paren{X_{T_b-t}}_{0\leq t\leq T_b}$ and $\paren{b-X_{t}}_{0\leq t\leq T_b}$ have the same law. 

With these preliminaries, let us commence the proof of Proposition \ref{rootchange}. Let $S$ be the instant in which $X$ attains its maximum and define, as for the normalized Brownian excursion, $X^S$ by\begin{esn}
X^S_t=M-X_{t+S\mod L}.
\end{esn}By using William's description of the It\^o measure and his time-reversal result, we see that under $n_+$ conditionally on $M=m$, $\paren{X^S,L}$ has the same law as $\paren{X,L}$, and so the same holds under $n_+$. If $\fun{g}{\re^n}{\re}$ is bounded and continuous, $\fun{f}{(0,\infty)}{\re}$ is a positive measurable function, and $0\leq t_1\leq \cdots\leq t_n$ we obtain the equality\begin{equation}
\label{testfunction}
\imf{n_+}{\imf{g}{X^S_{t_1},\ldots,X^S_{t_n}}\imf{f}{L}}
=\imf{n_+}{\imf{g}{X_{t_1},\ldots,X_{t_n}}\imf{f}{L}}.
\end{equation}However, by It\^o's description of the It\^o measure, the left hand side of the preceding display equals\begin{esn}
\int_{t_n}^\infty dv\, \frac{1}{2\sqrt{2\pi v^3}}\imf{f}{v}\imf{\pi^v}{\imf{g}{X^S_{t_1},\ldots,X^S_{t_n}}}
\end{esn}while the right-hand side equals\begin{esn}
\int_{t_n}^\infty dv\, \frac{1}{2\sqrt{2\pi v^3}}\imf{f}{v}\imf{\pi^v}{\imf{g}{X_{t_1},\ldots,X_{t_n}}}.
\end{esn}Because the equality in \eqref{testfunction} is valid for any positive measurable function $f$, we  conclude from the weak continuity of $v\mapsto \pi^v$  that\begin{esn}
\imf{\pi^v}{\imf{g}{X^S_{t_1},\ldots,X^S_{t_n}}}=\imf{\pi^v}{\imf{g}{X_{t_1},\ldots,X_{t_n}}},
\end{esn}for all $v>t_n$, so that $\pi^v$ is invariant under the transformation $X\mapsto X^S$.

\subsection{On Jeulin's limit theorem}
\label{jeulin}

In this subsection, we shall give a proof of \eqref{ourversionjeulin}. This result is analogous to Jeulin's limit theorem for the normalized Brownian excursion but it's verification will not rely on the delicate estimates used by the aforementioned author in \cite{jeulinsmg}. This is because we stop our processes at fixed times instead of the random times of last visit. 

  We recall Jeulin's theorem, which was introduced and proved in \cite{jeulinsmg}: if $\e$ is a normalized Brownian excursion and  we define $X^\eps=\paren{X^\eps_t}_{t\leq r/\eps^2}$ and $Y^\eta=\paren{Y^\eta_t}_{t\leq (1-r)/\eta^2}$ by\begin{align*}
X^\eps_t=\frac{1}{\eps}\e_{\eps^2 t}
\quad\text{and}\quad
Y^\eta_t=\frac{1}{\eta}\e_{1-\eta^2 t},
\end{align*}then the law of $\paren{X^\eps,Y^\eta}$, both coordinates  stopped when last visiting $a\geq 0$ before times $r$ and $1-r$ respectively, converges in variation as $\paren{\eps,\eta}\to 0$ to the law of two independent Bessel processes of dimension three starting at zero and killed on their last visit to $a$. (The  formulation in\cite{jeulinsmg} does not mention convergence in variation; this is implied by the proof.)

Let us now discuss equation \eqref{ourversionjeulin}. We shall work on the canonical spaces $\mc{C}^v$ where the laws\begin{esn}
\set{\pi_{x,y}^v:x,y,v>0},\end{esn}
$\pi_{x,y}^v$ corresponding to a Brownian bridge from $x$ to $y$ of length $v$ conditioned on remaining positive,  are defined. We will denote by $X$ and $\paren{\F_t}_{t\geq 0}$ the canonical process and filtration. 

As $y\to 0$, $\pi^{v}_{x,y}$ has a weak limit which shall be denoted $\pi_y^v$; this law satisfies a local absolute continuity with respect to the law of the three-dimensional Bessel process starting at zero, denoted $\p_0^3$, of the following form: $\pi_y^v|_{\F_s}$ is absolutely continuous with respect to $\p_0^3|_{\F_s}$ and the Radon-Nikod\'ym derivative $D_y^{v,s}$ can be written in terms of the canonical process $X$, the transition density $q_s$ of Brownian motion killed when it reaches zero and the density $f_x$ of the hitting time of $x$ by a Brownian motion started at zero as follows:\begin{esn}
\imf{D_{y}^{v,s}}{X}=\frac{\imf{q_{v-s}}{X_s,y}}{2\imf{f_y}{v}X_s}. 
\end{esn}From this, one might infer an inhomogeneous Markov property for $\pi_{y}^v$.

We shall use the following facts:  $\pi^v$ is the weak limit of $\pi_y^v$ as $y\to 0$ which satisfies the following result, combining its inhomogeneous Markov property  with time-reversibility: if  $\tilde X$ is the time-reversed process given  by $\tilde{X}_s=X_{v-s}$ and $\Phi$ is functional on the two-fold product of canonical space with itself which is positive and $\F_{s_1}\otimes\F_{s_2}$\nbd measurable, then for $0<s_i<v-s_2<v$\begin{equation}
\label{bridgeandreversibility}
\imf{\pi^v}{\imf{\Phi}{X,\tilde X}}=\imf{\pi^v}{\imf{\pi^{s_1}_{X_{s_1}}\otimes \pi^{s_2}_{\tilde{X}_{s_2}}}{\Phi}}.
\end{equation}We shall use this to establish a preliminary version of Jeulin's theorem with deterministic times in lieu of random ones. Consider the scaling operator $\fun{\mc{S}_{u}}{\mc{C}^v}{\mc{C}^{v/u}}$ defined as follows as follows: $\imf{\mc{S}_vf}{s}=\imf{f}{us}/\sqrt{u}$. Then, for $0<s_1<1-s_2<1$, as $\paren{\eps,\eta}\to \paren{0,0}$:\begin{equation}
\label{jeulinfixedtime}
\sup_{\|\Phi\|_{\infty}\leq 1}\abs{\imf{\pi^1}{\imf{\Phi}{\mc{S}_{\eps^2}\circ X,\mc{S}_{\eta^2}\circ \tilde X}}-\imf{\p_0^3\otimes\p_0^3}{\Phi}}\to 0.
\end{equation}In other words, Jeulin's theorem holds if we stop the processes at a fixed times instead of the random times of last visit. To see that \eqref{jeulinfixedtime} holds,  we need only remark that, from the explicit expressions\begin{esn}
\imf{q_s}{x,y}=\frac{1}{\sqrt{2\pi s}}\paren{e^{-\paren{y-x}^2/2s}-e^{\paren{x+y}^2/2s}}
\quad\text{and}\quad
\imf{f_x}{s}=\frac{x}{\sqrt{2\pi s^3}}e^{-x^2/2s},
\end{esn}the convergence\begin{equation}
\label{convergenceofdensity}
\imf{D_{y\sqrt{v}}^{s\sqrt{v},s_1}}{X}\to 1
\end{equation}as $v\to\infty$ with the other arguments fixed follows. To use the preceding asymptotic equivalence, we shall work on the threefold product of canonical space  with itself, with a measure constructed from $\pi^1$ and $\p_0^3\otimes\p_0^3$, and denote by $X$, $Y$ and $Z$ the first, second and third coordinate processes; $X$ will be used when integrating against $\pi^1$. By the bridge property \eqref{bridgeandreversibility} used with times $s$ and $1-s$, the local absolute continuity between $\pi^v_y$ and $\p_0^3$ and the scaling property $\pi_y^s\circ \mc{S}_{\eps^2}=\pi_{y/\eps}^{s/\eps^2}$, we may write\begin{align*}
&\abs{\imf{\pi^1}{\imf{\Phi}{\mc{S}_{\eps^2}\circ X,\mc{S}_{\eta^2}\circ \tilde X}}-\imf{\p_0^3\otimes\p_0^3}{\Phi}}
\\&\leq \|\Phi\|_\infty  \imf{\pi^1}{\imf{\p_0^3\otimes \p_0^3}{\abs{1-\imf{D_{X_s/\eps}^{s/\eps^2,s_1}}{Y}\imf{D_{X_s/\eps}^{\paren{1-s}/\eps^2,s_2}}{Z}}}}
\end{align*}for small $\eps$ and $\eta$. Since $\pi^1$-almost surely $x\mapsto \imf{\p_0^3\otimes \p_0^3}{\imf{D_{x/\eps}^{s/\eps^2,s_1}}{Y}\imf{D_{x/\eps}^{\paren{1-s}/\eps^2,s_2}}{Z}}$  converges to $1$ by \eqref{convergenceofdensity}, and it integrates $1$, it follows that the convergence holds also in $L^1$, proving \eqref{jeulinfixedtime}, which is actually stronger than \eqref{ourversionjeulin}.
\subsection{A law of the iterated logarithm}
In this subsection, we will prove Theorem \ref{lil} using the notation and preliminaries of Subsection \ref{haas}. Note that $H_t\leq M_t$ and that if we define $L_t$ as the length of the smallest closed interval that contains $\hat F^2_t$, then $M_t\leq L_t$. Let\begin{esn}
\imf{f}{t}=\frac{2t^2}{\log\abs{\log t}}
\end{esn}

We shall prove that\begin{esn}
\imf{\pi^1}{\liminf_{t\to 0+}\frac{H_t}{\imf{f}{t}}=1=\liminf_{t\to 0+}\frac{L_t}{\imf{f}{t}}}=1
\end{esn}which implies Theorem \ref{lil}. Let $T_m$ be the first hitting-time of $\set{m}$ and $L_t$ the last visit to $\set{t}$ (beware that $L_t$ stands for two different things). Let us recall that thanks to Williams decomposition of the It\^o measure $n_+$ and his time reversibility result we have\begin{align}
\label{lilCond}&\imf{\pi^1}{\liminf_{t\to 0+}\frac{L_t}{\imf{f}{t}}=1}
\\&=\int \frac{dm}{2m^2}\, \imf{\p_0^3\otimes\p_0^3}{\cond{\liminf_{t\to 0+}\frac{\imf{L_t}{X}+\imf{L_t}{Y}}{\imf{f}{t}}=1}{\imf{T_m}{X}+\imf{T_m}{Y}=1}}.\nonumber
\end{align}Under $\p_0^3\otimes \p_0^3$, $\paren{\imf{L_t}{X}+\imf{L_t}{Y}}_{t\geq 0}$ is a stable subordinator with Laplace exponent $\lambda\mapsto 2\sqrt{2\lambda}$, and by the iterated logarithm law for subordinators (cf. \cite[Thm. 11, p. 88]{bertoinpl}) we have\begin{esn}
 \imf{\p_0^3\otimes\p_0^3}{\liminf_{t\to 0+}\frac{\imf{L_t}{X}+\imf{L_t}{Y}}{\imf{f}{t}}=1}=1
\end{esn}and so:\begin{esn}
\imf{\p_0^3\otimes\p_0^3}{\cond{\liminf_{t\to 0+}\frac{\imf{L_t}{X}+\imf{L_t}{Y}}{\imf{f}{t}}=1}{\imf{T_m}{X}+\imf{T_m}{Y}=1}}=1.
\end{esn}We can therefore conclude:\begin{esn}
\imf{\pi^1}{\liminf_{t\to 0+}\frac{L_t}{\imf{f}{t}}=1}=1;
\end{esn}since $H_t\leq L_t$, then $\liminf_{t\to 0+}H_t/ \imf{f}{t}\leq 1$ $\pi^1$-almost surely and so it remains to prove a lower bound for this quantity. To do so, note that\begin{align*}
&\imf{\pi^1}{\liminf_{t\to 0+}\frac{H_t}{\imf{f}{t}}\geq 1}
\\&=\int \frac{dm}{2m^2}\, \imf{\p_0^3\otimes\p_0^3}{\cond{\liminf_{t\to 0+}\frac{\imf{T_t}{X}+\imf{T_t}{Y}}{\imf{f}{t}}\geq 1}{\imf{T_m}{X}+\imf{T_m}{Y}=1}}.
\end{align*}We will prove the equality\begin{equation}
\label{lilForBesT}
\imf{\p_0^3\otimes\p_0^3}{\liminf_{t\to 0+}\frac{\imf{T_t}{X}+\imf{T_t}{Y}}{\imf{f}{t}}\geq 1}=1
\end{equation}from which we deduce
\begin{esn}
\imf{\p_0^3\otimes\p_0^3}{\cond{\liminf_{t\to 0+}\frac{\imf{T_t}{X}+\imf{T_t}{Y}}{\imf{f}{t}}\geq 1}{\imf{T_m}{X}+\imf{T_m}{Y}=1}}=1
\end{esn}so that the $\pi^1$-almost sure lower bound $\liminf_{t\to 0+}H_t /\imf{f}{t}\geq 1$ follows, proving Theorem \ref{lil}. 

It remains to prove \eqref{lilForBesT}. This can be done based on the simple Lemma 3.1 of \cite{watanabeIntegralTestsForClassL} which translates in our case as follows. 
\begin{lem}[Watanabe, \cite{watanabeIntegralTestsForClassL}]
\label{WatanabesLemma}
Let $F$ be the distribution function of $\imf{T_1}{X}+\imf{T_1}{Y}$ under $\p_0^3\otimes\p_0^3$. If for all $c<1$, the integral\begin{equation}
\label{integral}
\int_{0+}\frac{1}{t}\imf{F}{\frac{2c}{\log\abs\log t}}\, dt
\end{equation}is finite, then \eqref{lilForBesT} holds. 
\end{lem}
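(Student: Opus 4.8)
The plan is to prove Lemma~\ref{WatanabesLemma} by a first Borel--Cantelli argument along a geometric subsequence of levels, which is exactly the scheme of Watanabe's Lemma~3.1; the only model-specific input is the self-similarity of the three-dimensional Bessel process. Write $S_t=\imf{T_t}{X}+\imf{T_t}{Y}$. First I would record the scaling identity: since a three-dimensional Bessel process started at zero is self-similar of index $1/2$, its first hitting times satisfy $\imf{T_{\lambda t}}{X}\stackrel{d}{=}\lambda^2\imf{T_t}{X}$ for each $\lambda>0$, and since $X$ and $Y$ are independent under $\p_0^3\otimes\p_0^3$ this holds jointly, so $S_t$ has the law of $t^2S_1$. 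Consequently, writing $\imf{f}{t}=2t^2/\log\abs{\log t}$ and recalling that $F$ is the distribution function of $S_1=\imf{T_1}{X}+\imf{T_1}{Y}$, one obtains for every small enough $t$ the exact identity $\imf{\p_0^3\otimes\p_0^3}{S_t<c\imf{f}{t}}=\imf{F}{2c/\log\abs{\log t}}$.

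Next I would fix $\theta\in(0,1)$ and $c<1$, set $t_n=\theta^n$, and sum these probabilities over $n$. Since $\log\abs{\log t_n}=\log(n\abs{\log\theta})$, the sum equals $\sum_n\imf{F}{2c/\log(n\abs{\log\theta})}$, whose terms are eventually nonincreasing; comparing it with the integral $\int^\infty\imf{F}{2c/\log(x\abs{\log\theta})}\,dx$ and then substituting $t=\theta^x$ identifies the latter, up to the positive factor $\abs{\log\theta}$, with $\int_{0+}t^{-1}\imf{F}{2c/\log\abs{\log t}}\,dt$, which is finite by hypothesis. Hence $\sum_n\imf{\p_0^3\otimes\p_0^3}{S_{t_n}<c\imf{f}{t_n}}<\infty$, and the first Borel--Cantelli lemma yields $S_{t_n}\ge c\imf{f}{t_n}$ for all large $n$, $\p_0^3\otimes\p_0^3$-almost surely.

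The final step interpolates between consecutive $t_n$. The map $t\mapsto S_t$ is nondecreasing and $f$ is nondecreasing on a right neighbourhood of $0$, so for small $t$, choosing $n$ with $t\in[t_{n+1},t_n]$, one has $S_t\ge S_{t_{n+1}}\ge c\imf{f}{t_{n+1}}$ and $\imf{f}{t}\le\imf{f}{t_n}$, whence $S_t/\imf{f}{t}\ge c\,\imf{f}{t_{n+1}}/\imf{f}{t_n}$; and $\imf{f}{t_{n+1}}/\imf{f}{t_n}=\theta^2\log(n\abs{\log\theta})/\log((n+1)\abs{\log\theta})\to\theta^2$ as $n\to\infty$. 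Thus $\liminf_{t\to0+}S_t/\imf{f}{t}\ge c\theta^2$ almost surely, and letting $c\uparrow1$ and $\theta\uparrow1$ along sequences gives \eqref{lilForBesT}. I do not anticipate a real obstacle: the argument is routine once the Step~1 scaling identity is written down, the only points needing a little care being the elementary series/integral comparison (and checking that the additive constant $\log\abs{\log\theta}$ inside the logarithm is harmless) and the monotonicity of $f$ near $0$ used for the interpolation. Alternatively one could simply invoke Watanabe's Lemma~3.1 after verifying $S$ meets its hypotheses, but the self-contained version above is cleaner.
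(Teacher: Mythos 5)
Your proof is correct, and it is worth noting that it does more than the paper does at this point: the paper offers no proof of this lemma at all, it simply quotes Lemma~3.1 of Watanabe's paper translated to the present setting, and then devotes its effort to verifying the integral condition (via Lemma~\ref{BertoinsBound}). Your self-contained argument is exactly the mechanism underlying Watanabe's lemma: the scaling identity $T_{\lambda a}\stackrel{d}{=}\lambda^2 T_a$ for the Bessel(3) hitting times (jointly for the two independent copies) turns $\proba{S_{t}<c\imf{f}{t}}$ into $\imf{F}{2c/\log\abs{\log t}}$, the series/integral comparison along $t_n=\theta^n$ converts the hypothesis into summability, the first Borel--Cantelli lemma gives $S_{t_n}\geq c\imf{f}{t_n}$ eventually, and the pathwise monotonicity of $t\mapsto S_t$ together with the monotonicity of $\imf{f}{\cdot}$ near $0+$ and $\imf{f}{t_{n+1}}/\imf{f}{t_n}\to\theta^2$ yields $\liminf S_t/\imf{f}{t}\geq c\theta^2$, after which a countable diagonal limit $c\uparrow 1$, $\theta\uparrow 1$ finishes the proof. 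Two microscopic points to tidy: the ``exact identity'' in your first step is a priori only the inequality $\proba{S_t<c\imf{f}{t}}\leq \imf{F}{2c/\log\abs{\log t}}$, which suffices (equality does hold here because $\imf{T_1}{X}+\imf{T_1}{Y}$ has a density, but you should either say so or just use the inequality); and the final intersection over the countable family of pairs $(c_k,\theta_k)$ should be stated explicitly. With those remarks, your route and the cited Watanabe lemma are interchangeable: invoking the reference is shorter, while your version avoids checking that the setting fits Watanabe's hypotheses and keeps the paper self-contained.
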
To study the integral appearing in the preceding lemma, we use the following result:
\begin{lem}
\label{BertoinsBound}
For all $c>0$,\begin{esn}
\liminf_{t\downarrow 0}-\frac{\log\imf{F}{\frac{2c}{\log\abs\log t}}}{\log\abs\log t}\,\geq \frac{1}{c}.
\end{esn}
\end{lem}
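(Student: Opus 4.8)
The plan is to obtain a lower bound on $-\log \imf{F}{x}$ as $x\downarrow 0$, where $F$ is the distribution function of $\imf{T_1}{X}+\imf{T_1}{Y}$ under $\p_0^3\otimes\p_0^3$, and then substitute $x = 2c/\log\abs{\log t}$. First I would recall that, as already noted in Subsection \ref{haas} (via the reversal relating the three-dimensional Bessel process to Brownian motion killed at $0$), the process $t\mapsto \imf{T_t}{X}$ is a $1/2$-stable subordinator; consequently $\imf{T_1}{X}+\imf{T_1}{Y}$ has the law of a stable subordinator of index $1/2$ evaluated at time $2$, equivalently $4\imf{T_1}{X}$ in distribution, whose density was recorded in Subsection \ref{brownianrepresentation} as $\imf{f_t}{x}=\sqrt{2}t e^{-2t^2/x}/\sqrt{\pi x^3}$ up to the correct choice of constants. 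The key point is that the density of such a variable near the origin behaves like $\exp(-a/x)$ for an explicit $a>0$, so $\imf{F}{x}=\int_0^x (\text{density})\,dy$ is of order $\exp(-a/x)$ as well; in particular $-\log \imf{F}{x} \sim a/x$ as $x\downarrow 0$.

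Concretely I would estimate $\imf{F}{x}$ from below and above: the lower bound $\imf{F}{x}\ge \int_0^x (\text{density})\,dy \ge (\text{const})\, x \cdot \exp(-a/x)$ is immediate, and a matching upper bound follows from the unimodality/monotonicity of $y\mapsto e^{-a/y}$ on $(0,x)$ (or from a crude bound $\imf{F}{x}\le (\text{const}) \exp(-a/x) x^{-1/2}$), so that in all cases
\begin{esn}
-\log\imf{F}{x} = \frac{a}{x} + O(\log(1/x)) \quad (x\downarrow 0).
\end{esn}
The constant $a$ is pinned down by the normalisation: since $\paren{\imf{L_t}{X}+\imf{L_t}{Y}}_{t\ge 0}$ — and hence $\paren{\imf{T_t}{X}+\imf{T_t}{Y}}_{t\ge 0}$, which has the same one-dimensional law at each fixed argument up to the obvious identification — has Laplace exponent $\lambda\mapsto 2\sqrt{2\lambda}$ (as used just before in \eqref{lilCond}), the large-deviation rate at small argument is $a=2$, so that $-\log\imf{F}{x}\sim 2/x$. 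Substituting $x=2c/\log\abs{\log t}$ gives $-\log\imf{F}{2c/\log\abs{\log t}}\sim (1/c)\log\abs{\log t}$, and dividing by $\log\abs{\log t}$ and taking $\liminf$ as $t\downarrow 0$ yields the claimed bound $\liminf \ge 1/c$, with the $O(\log(1/x))$ correction contributing $O(\log\log\abs{\log t})$ which is negligible against $\log\abs{\log t}$.

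The main obstacle is purely bookkeeping: getting the constant $a=2$ right. One must be careful about which $1/2$-stable subordinator is involved (the Laplace exponent $2\sqrt{2\lambda}$ fixes the scaling), about the factor of two coming from adding the two independent copies $X$ and $Y$, and about the precise form of the stable density $f_t$ with the correct normalising constants as displayed in Subsection \ref{brownianrepresentation}. Once the density is written correctly, the small-$x$ asymptotics of $\imf{F}{x}$ is an elementary Laplace-type estimate — the exponential term $e^{-2/x}$ dominates and the polynomial prefactors only affect lower-order terms that wash out after dividing by $\log\abs{\log t}$. No delicate argument beyond this is needed; the inequality (rather than an equality) in the statement of the lemma gives room to absorb all such lower-order terms.
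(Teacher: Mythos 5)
There is a genuine gap: your argument rests on the claim that $t\mapsto \imf{T_t}{X}$ is a $1/2$-stable subordinator, so that $\imf{T_1}{X}+\imf{T_1}{Y}$ is a stable variable with an explicit density of the form (const)$\,x^{-3/2}e^{-2/x}$. This is false. For the three-dimensional Bessel process started at $0$ it is the \emph{last-passage} process $\paren{\imf{L_t}{X}}_{t\geq 0}$ that is a stable subordinator (this is what is used just before \eqref{lilCond}); the \emph{first-passage} process $\paren{\imf{T_t}{X}}_{t\geq 0}$ has independent but non-stationary increments --- it is the self-similar \emph{additive} process appearing in Corollary \ref{corasymptotic} --- and $\imf{T_1}{X}+\imf{T_1}{Y}$ has Laplace transform $\paren{\sqrt{2\lambda}/\imf{\sinh}{\sqrt{2\lambda}}}^2$, which is not stable. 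In particular $\imf{T_1}{X}+\imf{T_1}{Y}$ and $\imf{L_1}{X}+\imf{L_1}{Y}$ do \emph{not} have the same law (one has $T_t<L_t$ pathwise), contrary to your assertion that they agree ``up to the obvious identification''; this distinction is precisely why Lemma \ref{BertoinsBound} is needed at all, since for $L$ one can quote the subordinator law of the iterated logarithm directly, while for $T$ one cannot. Nor can the error be patched by monotone comparison with the stable law: $T_t\leq L_t$ yields a \emph{lower} bound on $F$ near $0$, whereas the lemma requires an \emph{upper} bound on $\imf{F}{x}$ for small $x$.

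What survives is your final asymptotic $-\log \imf{F}{x}\sim 2/x$, but for a reason your argument does not supply: the true Laplace exponent $\imf{\Phi}{\lambda}=2\log\paren{\imf{\sinh}{\sqrt{2\lambda}}/\sqrt{2\lambda}}$ satisfies $\imf{\Phi}{\lambda}\sim 2\sqrt{2\lambda}$ as $\lambda\to\infty$, i.e.\ it has the same first-order growth as the exponent of the stable law you substituted, so the Legendre-type rate $\sup_{\lambda}\paren{\imf{\Phi}{\lambda}-\lambda x}\sim 2/x$ happens to coincide --- a coincidence at the level of exponents, not of densities or prefactors. The paper's proof makes exactly this precise without any density: starting from $\mathbb{E}\paren{e^{-\lambda T_1}}=\sqrt{2\lambda}/\imf{\sinh}{\sqrt{2\lambda}}$, a Chebyshev/Chernoff bound gives $-\log\imf{F}{c\imf{g}{t}}\geq \imf{\Phi}{\lambda}-\lambda c\imf{g}{t}$ with $\imf{g}{t}=2/\log\abs{\log t}$, and choosing $\lambda=\imf{\phi}{\kappa\log\abs{\log t}}$ (with $\phi=\Phi^{-1}$, $\imf{\phi}{\lambda}\sim\lambda^2/8$) and optimizing at $\kappa=2/c$ yields the stated $\liminf\geq 1/c$. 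If you replace your stable-density step by this Chernoff estimate (or by an exponential Tauberian theorem applied to the exact transform), your outline becomes correct, and the prefactor bookkeeping you were concerned about disappears.
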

Lemma \ref{BertoinsBound} implies that given $c<1$, there exists $\eta>1$ such that\begin{esn}
\imf{F}{\frac{2c}{\log\abs\log t}}\leq \frac{1}{\abs{\log t}^\eta}
\end{esn}for all small enough $t$, so that the integral  in \eqref{integral} is finite. Hence, it only remains to prove Lemma \ref{BertoinsBound}. This is done by following part of the reasoning used to prove Lemma 12 in \cite[III, p.88]{bertoinpl}; we will present only a sketch. 
\begin{proof}[Proof of Lemma \ref{BertoinsBound}]
Since\begin{esn}
\imf{\p_0^3}{e^{-\lambda T_1}}=\frac{\sqrt{2\lambda}}{\imf{\sinh}{\sqrt{2\lambda}}},
\end{esn}then by defining\begin{esn}
\int_0^\infty e^{-\lambda x}\,\imf{F}{dx}=e^{-\imf{\Phi}{\lambda}},
\end{esn}we have $\imf{\Phi}{\lambda}\sim 2\sqrt{2}\lambda$ is $\lambda\to\infty$. Let $\imf{g}{t}=2/\log\abs\log{t}$ and use Chevyshev's inequality to get\begin{equation}
\label{chebyshev}
-\log\imf{F}{c/\imf{g}{t}}\geq \imf{\Phi}{\lambda}-\lambda c \imf{g}{t}.
\end{equation}Let $\phi$ stand for the inverse of $\Phi$, so that $\imf{\phi}{\lambda}\sim \lambda^2/8$ as $\lambda\to\infty$ and take $\lambda=\imf{\phi}{\kappa \log\abs\log t}$ (for some constant $\kappa$ to be specified in a moment). Then the lower bound of \eqref{chebyshev} is asymptotic to\begin{esn}
\paren{\kappa-\kappa\frac{c}{4}}\log\abs\log t,
\end{esn}which attains its maximum $\log\abs\log t\,/c$ when $\kappa=2/c$.
\end{proof}
\section*{Acknowledgements}
I would like to express my gratitude towards my PhD supervisors  Jean Bertoin and Mar\ii a Emilia Caballero for their patience, guidance, and support.  Discussions with Benedicte Haas were crucial to the work at hand, especially regarding the path transformation which she was kind enough to let me present. Part of this work was carried out during while visiting the {\it Laboratoire de Probabilit\'es et Mod\`eles Al\'eatoires} of Paris VI University; the hospitality of its members is heartily thanked. 
Finally, I wish to thank the anonymous referee who made acute remarks on content and style and who suggested to look for a result on the lines of Theorem \ref{lil}.
\bibliography{browfrag}
\bibliographystyle{amsplain}
\end{document}